\documentclass{article}

\usepackage[french]{babel}
\usepackage[utf8]{inputenc}
\usepackage[T1]{fontenc}
\usepackage{hyperref}

\usepackage{amsmath}
\usepackage{amsthm}
\usepackage{amsfonts}
\makeatletter
\def\amsbb{\use@mathgroup \M@U \symAMSb}
\makeatother
\usepackage{bbold}
\DeclareSymbolFont{bbold}{U}{bbold}{m}{n}
\DeclareSymbolFontAlphabet{\mathbbold}{bbold}

\def \AA {\amsbb A}
\def \CC {\amsbb C}
\def \HH {\amsbb H}

\def \NN {\amsbb N}
\def \QQ {\amsbb Q}
\def \RR {\amsbb R}

\def \ZZ {\amsbb Z}
\def \ind {\mathbb 1}
\def \ii {\iota}
\def \GL {\mathrm{GL}}

\theoremstyle{remark}
\newtheorem{remark}{Remarque}
\theoremstyle{plain}
\newtheorem{lemma}{Lemme}
\newtheorem{proposition}{Proposition}

\title{Distributions de séries d'Eisenstein presque holomorphes sur un
  corps totalement réel}

\author{Julien Puydt\\
Institut Joseph Fourier UMR5582\\
Grenoble, France\\
E-mail:\texttt{julien.puydt@ujf-grenoble.fr}
}

\begin{document}

\maketitle

\begin{abstract}
  On définit d'abord un type de séries d'Eisenstein, sous forme de
  distributions fournissant des formes automorphes presque-holomorphes
  sur un corps de nombre totalement réel, en donnant différentes
  expressions (intégrale, sommatoire) ; on montre ensuite qu'elles
  vérifient bien les propriétés attendues, puis on calcule les
  coefficients de Fourier explicitement. On termine par un calcul
  explicite dans le cas le plus simple, qui fait apparaître que les
  objets abstraits considérés ici généralisent bien les objets
  classiques.
\end{abstract}

\section{Introduction}\label{sec:intro}

\subsection{Résumé}

Ma thèse de doctorat~\cite{Puydt2003a} portait sur la déter\-mination
de congru\-ences sur les valeurs spéciales des fonctions $L$ de
certaines formes modulaires, connaissant des congruences sur leurs
coefficients de Fourier. Les résultats étaient établis dans un cadre
semi-adélique sur le corps des nombres rationnels.

Le principe de base est d'utiliser la méthode de Rankin-Selberg pour
obtenir des congruences sur les valeurs spéciales à partir de
congruences sur les coefficients de Fourier. Cette méthode nécessite
de construire des séries d'Eisenstein adaptées au cadre dans lequel on
souhaite travailler. Le but du présent travail est donc la
construction d'une variante des séries d'Eisenstein pour généraliser
le travail de ma thèse au cas de corps de nombres totalement
réels.

La construction présentée ici possède la combinaison suivante de
caractéristiques~:
\begin{itemize}
\item elle se présente sous forme de distributions, car la liberté de
  choisir de bonnes fonctions-tests donne plus de souplesse pour les
  applications ;
\item le cadre est adélique (ou semi-adélique), car on peut alors
  faire intervenir des caractères de conducteur relativement
  arbitraire ;
\item on considère des formes automorphes, pour se rapprocher de la
  théorie des représentations ;
\item une régularité moins restrictive que l'holomorphie, mais sans
  trop s'en éloigner pour garder une structure rationnelle~: la
  presque-holomorphie.
\end{itemize}

On termine le travail par un exemple extrêmement concret qui montre
que les séries les plus classiques sont un sous-cas, obtenu en faisant
des choix de paramètres et de fonction-test simples.

\subsection{Rapide historique}

La littérature regorge d'exemples de séries d'Eisenstein, possédant
diverses combinaisons de bonnes propriétés (en plus de celles qui \og
caractérisent\fg{} ce type d'objets) ; passons quelques exemples en
revue.

Shimura, dans son article~\cite{Shimura1983a}, travaille sur un corps
de nombres totalement réel (et par moments une extension totalement
imaginaire de ce dernier), mais avec des formes modulaires classiques
(pas dans un cadre de représentations), sur des groupes symplectiques
et unitaires. Enfin, les séries d'Eisenstein considérées ne prennent
pas la forme de distributions.

Hida, dans son article~\cite{Hida1985a}, utilise une série
d'Eisenstein comme une mesure (bornée), mais le cadre est limité à
$\QQ$, et il applique des opérateurs différentiels de Shimura pour
obtenir des formes modulaires presque-holomorphes (bien qu'il ne les
appelle pas ainsi), auxquelles il applique ensuite un opérateur de
projection holomorphe.

Dans le livre~\cite{Panchishkin1991a} de Panchishkin, en section~4.1,
on trouve présentée une série d'Eisenstein dans le cadre des formes
modulaires de Hilbert, dont le développement en série de Fourier est
explicité en proposition~4.2. Le travail est bien sur un corps
totalement réel et des distributions sont construites comme définies
sur un groupe de Galois. L'approche n'est pas celle de la théorie des
représentations.

Dans son article~\cite{Scholl1998a}, Scholl, en section~4.3, page~433,
présente une construction sous forme de distributions dans un cadre
de formes automorphes adéliques, mais limitée aux adèles sur $\QQ$, et
à des formes modulaires holomorphes.

Finalement, dans ma thèse~\cite{Puydt2003a}, les séries d'Eisenstein
étaient des distributions adéliques et fournissait des formes
automorphes presque-holo\-mor\-phes, mais le corps de base était $\QQ$.

\subsection{Résultats}

Pour une fonction de Schwartz-Bruhat $\varphi$, on définit
(en~\ref{sec:def_int}) la série d'Eisenstein associée par la formule~:
\[
E^{\rm ana}_{\omega,s}(\varphi):g
\longmapsto
|\det g|^{-s}
\int_{\AA^\times/F^\times}\omega(t)|t|^{2s}\Theta(\varphi)(g/t){\rm d}^\times t
\]
où~:
\[
\Theta(\varphi):g\longmapsto\sum_{v\in F^2\backslash\{0\}}(g\varphi)(v)
\]
dont on montre la convergence sous réserve que la partie réelle de $s$
soit assez élevée en~\ref{sec:conv}.

On montre en~\ref{sec:analytique} que cette définition se prolonge de
façon méromorphe en $s$, avec d'éventuels pôles simples en $1$ et $0$,
avec une ébauche d'équation fonctionnelle ; qui est ensuite affinée en
l'équation fonctionnelle suivante en~\ref{sec:eqfun}~:
\[
E^{\rm ana}_{\omega,s}(\varphi)(g)
=
E^{\rm ana}_{\bar\omega,1-s}(\hat\varphi)(g^\sharp)
\]
où $\hat\varphi$ est la transformée de Fourier (symplectique) de
$\varphi$ et $g^\sharp=\frac{1}{\det g}g$ (deux involutions).

Ces bonnes propriétés analytiques établies, on prouve alors que la
fonction $g\mapsto E_{\omega,s}^{\rm ana}(\varphi)(g)$ a de bonnes
propriétés d'automorphie~:
\begin{itemize}
\item elle est $\GL_2(F)$-invariante à gauche (\ref{sec:autom_F_inv}) ;
\item $\omega$ est un caractère central (\ref{sec:autom_centre}) ;
\item de bons choix pour les parties archimédiennes donnent une notion
  de poids au sens des formes automorphes (\ref{sec:autom_poids})~:
  \[
  E_{\omega,s}^{\rm ana}(\varphi)(g\rho)
  =
  \left(\prod_{\ii\in I}e^{ik_\ii\theta_\ii}\right)
  E_{\omega,s}^{\rm ana}(\varphi)(g)
  \]
  où~:
  \[
  \rho=\left(
    \begin{pmatrix}
      \cos(\theta_\ii) & \sin(\theta_\ii) \\
      -\sin(\theta_\ii) & \cos(\theta_\ii)
    \end{pmatrix}
  \right)_{\ii\in I}
  \]
\item il existe un sous-groupe de $\GL_2(\hat{\mathcal O})$ d'indice
  fini pour lequel elle est invariante à droite
  (\ref{sec:autom_niveau}) ;
\item la croissance est modérée (\ref{sec:autom_cr_moderee}).
\end{itemize}

On calcule très explicitement la fonction de Whittaker de la forme
automorphe, et on prouve la formule suivante (en
page~\pageref{frml:Whit_int})~:
\[
a_1(g)=
|\det g|^{-s}
\int_{\AA\times\AA^\times}\omega(t)|t|^{2s}
(g\varphi)\left(t\binom{u}1\right) \Psi(u) {\rm d}u{\rm d^\times}t
\]

Cette formule ouvre la porte à deux types de considération~:
\begin{itemize}
\item d'une part, elle montre que si $\varphi$ se factorise suivant
  les différentes places, alors la fonction de Whittaker se factorise
  de même ;
\item d'autre part, elle permet d'étudier plus finement la régularité en
les places infinies ; on montre alors en ~\ref{sec:phol_cas} que pour
le choix $s=\frac{k}2-r$ avec $r\in\NN$, on obtient une forme
automorphe presque-holomorphe au sens de~\ref{sec:prelim_phol}.
\end{itemize}

\subsection{Applications}

La motivation principale à cette nouvelle construction est l'étude de
congru\-ences pour les valeurs spéciales des fonctions $L$ des formes
modulaires, pour généraliser les résultats obtenus dans ma
thèse~\cite{Puydt2003a} sur un corps totalement réel.

À partir de congruences sur les coefficients de Fourier, la méthode de
Rankin-Selberg permet d'obtenir des congruences sur les valeurs
spéciales. Cette notion de congruence nécessite en préliminaire que
les objets considérés vérifient des propriétés d'algébricité, et que les
espaces admettent des structures rationnelles.

De tels résultats sont déjà connus dans des cadres proches. Par
exemple, dans le cas des formes modulaires de Hilbert sur un corps
totalement réel, Shimura~\cite{Shimura1978a} a montré l'algébricité
des valeurs spéciales. On peut aussi citer le
livre~\cite{Panchishkin1991a} de Panchishkin, qui montre des
congruences entre valeurs spéciales, mais utilise la méthode de la
projection holomorphe, qui introduit des complications.

On sait que l'on peut contourner ces difficultés via la méthode de la
projection canonique, qui a déjà été utilisée pour obtenir de bonnes
congruences dans divers cadres (\cite{Panchishkin2002b} et
\cite{Puydt2003a}). Il reste à l'étendre au cas des formes automorphes
sur un corps totalement réel qui nous intéresse.

Dans un second temps, on peut penser que l'obtention de telles
congruences pour des fonctions $L$ complexes permettra d'obtenir des
renseignements sur des fonctions $L$ $p$-adiques ; par exemple en
utilisant des méthodes similaires au travail de
Manin~\cite{Manin1976a}, ou celui de Dabrowski~\cite{Dabrowski1994a}.

Enfin, comme le calcul des coefficients de Fourier des séries
d'Eisenstein développées ici est assez explicite, il pourra être
intéressant d'en étudier plus finement les dénominateurs pour les
contrôler, en s'approchant de résultats d'intégralité, par exemple
dans le cas de séries d'Eisenstein-Klingen (l'algébricité a déjà été
étudiée par Michael Harris durant les années 1980).

\subsection{Plan du travail}

Après cette introduction, on trouvera quelques
préliminaires~\ref{sec:prelim}, pour fixer les notations d'abord, pour
rappeler rapidement quelques résultats classiques d'analyse harmonique
sur les adèles, puis pour présenter une fonction confluente
hypergéométrique et deux calculs d'intégrales. Ces préliminaires se
terminent par une assez longue discussion de la notion de
presque-holomorphie~\ref{sec:phol}, qui permet de définir une
structure rationnelle sur les formes automorphes
en~\ref{sec:phol_rationn}.

On définit alors les distributions proprement dites, d'abord via une
expression intégrale en~\ref{sec:def_int}, dont on montre d'abord la
convergence en~\ref{sec:conv}, avant d'en donner une expression
sommatoire en~\ref{sec:def_somme}.

On étudie ensuite le prolongement analytique et l'équation
fonctionnelle en~\ref{sec:ana_eq_fun}, avant de prouver une par une
les différentes propriétés d'automorphie en~\ref{sec:autom}.

Après, en section~\ref{sec:coeffs}, on discute des coefficients de
Fourier. On obtient une expression explicite de la fonction de
Whittaker en~\ref{sec:coeffs_Whittaker}, qui permet de conclure par la
section~\ref{sec:phol}, dans laquelle on établit la
presque-holomorphie des séries d'Eisenstein considérées sous certaines
hypothèses en~\ref{sec:phol_cas} ; avec des coefficients dont
l'algébricité est assez bien contrôlée.

Finalement, en section~\ref{sec:explicite}, on montre que si l'on fait
certains choix simples, la construction permet de retrouver les séries
d'Eisenstein classiques.

\section{Préliminaires}\label{sec:prelim}

\subsection{Corps de base et données afférentes}

$F$ est un corps de nombres totalement réel, et $I$ l'ensemble de ses
plongements réels.

On note $\mathcal O$ l'anneau des entiers de $F$, $\AA$ l'anneau des
adèles de $F$ ; on note $\AA_\infty$ la partie archimédienne des
adèles, identifiée à $\RR^I$, et $\AA_f$ la partie non-archimédienne ;
de façon plus générale, si $x$ est un objet adélique, on notera $x_f$
sa partie non-archimédienne et $x_\infty$ sa partie archimédienne, et
pour chaque place $v$ de $F$, on notera $x_v$ sa composante le long de
$v$ ; la complétion étant notée $F_v$. On écrira en particulier
$x_\ii$ pour désigner la partie archimédienne de l'objet $x$ pour le
plongement $\ii\in I$. Dans $\AA_f$, on notera $\hat{\mathcal O}$
l'anneau des entiers complétés ; on sait que $\AA_f=\hat{\mathcal
  O}\otimes F$.

On notera $\AA_1^\times$ les idèles de norme $1$, et de façon
similaire $\AA_{<1}^\times$ (respectivement $\AA_{>1}^\times$) les
idèles de normes strictement plus petite (respectivement strictement
plus grande) que $1$.

\subsection{Notations matricielles}

Pour tout anneau $R$, $\mathcal{M}_2(R)$ est l'algèbre des matrices
$2\times2$ à coefficients dans $R$, et $\GL_2(R)$ est son sous-groupe
des matrices inversibles ; de manière générale, si $m$ est une telle
matrice, on notera~:
\[
m=\begin{pmatrix}a_m & b_m \\ c_m & d_m\end{pmatrix}
\]

Dans ce sous-groupe, on considèrera~:
\begin{itemize}
\item $B(R)$ les matrices boréliennes, telles que $c_m=0$ ;
\item $U(R)$ les matrices de translation, qui sont boréliennes avec
  $a_m=c_m=1$ ;
\item $T(R)$ les matrices diagonales, telles que $b_m=c_m=0$ ;
\item $Z(R)\subset T(R)$ le centre, telles que $a_m=d_m$ ;
\item si $\mathfrak n$ est un idéal de $R$, on notera $B(\mathfrak n)$
  les matrices $m$ inversibles telles que $c_m\equiv0\bmod\mathfrak n$
  ;
\item dans les mêmes conditions, $\Gamma(\mathfrak n)=B(\mathfrak
  n)\cap SL_2(R)$.
\end{itemize}

Ces dernières notations seront en particulier utile dans
$\hat{\mathcal O}$ -- l'anneau sera alors implicite.

\subsection{Analyse harmonique sur les adèles}

La référence usuelle pour cette section est bien évidemment la thèse
de Tate~\cite{Tate1965b} ; elle est aussi présentée (avec une
bibliographie assez riche) dans l'article de survol de
Kudla~\cite{Kudla2003a}. Le livre de Ramakrishnan et
Valenza~\cite{RamakrishnanValenza1999a} est une bonne introduction
(plus long). Bien sûr, Weil~\cite{Weil1967a} couvre tout en détail.

L'espace vectoriel des fonctions de Schwartz-Bruhat définies sur
$\AA^2$ et à valeurs dans $\CC$ sera noté $\mathcal S(\AA^2)$.

Toutes ne s'écrivent pas sous la forme
$x\mapsto\varphi_f(x_f)\varphi_\infty(x_\infty)$, où $\varphi_f$ est
localement constante à support compact et $\varphi_\infty$ de classe
$\mathcal C^\infty$ à décroissance rapide ainsi que toutes ses
dérivées, mais les fonctions de ce type engendrent vectoriellement
l'espace ; on pourra donc souvent s'y ramener.

On fixe le caractère additif $\Psi:\AA\rightarrow\CC$ usuel et on
choisit les normalisations usuelles des mesures de Haar pour que les
formules d'inversion de la transformée de Fourier fonctionnent. En
particulier, ce caractère additif est défini par produit sur les
places ; avec sur les places finies $v|p$ une expression
$x\mapsto\exp(-2i\pi{\rm Tr}_{F_v/\QQ_p}(x))$, et en les places
infinies une expression $x\mapsto\exp(2i\pi x)$ (on rappelle que
toutes les places infinies sont réelles).

On définit la transformée de Fourier (symplectique) d'une fonction de
Schwartz-Bruhat $\varphi$ par la formule:
\[
\hat\varphi:(x_1,x_2)
\mapsto
\int_{\AA^2}\varphi(y_1,y_2)\Psi(x_1y_2-x_2y_1){\rm d}y_1{\rm d}y_2
\]

Cette opération définit une bijection sur $\mathcal S(\AA^2)$, pour
laquelle on dispose du résultat suivant, conséquence de la formule de
Poisson~:
\[
\forall t\in\AA^\times,
\sum_{v\in F^2}\varphi(tv)=|t|^{-2}\sum_{v\in F^2}\hat\varphi(v/t)
\]

Le résultat suivant, classique, est énoncé et prouvé par exemple dans
la thèse de Tate~\cite{Tate1965b} (page 337), et dans le livre de
Ramakrishnan et Valenza~\cite{RamakrishnanValenza1999a} (page 283)~:
\begin{lemma}\label{lem:vol_ideles_1}
  \[
  {\rm Vol}(\AA_1^\times/F^\times)
  =
  \frac{2^{r_1}(2\pi)^{r_2}h_F R_F}{w_F\sqrt{|d_F|}}
  \left( =-{\rm Res}_1\zeta_F \right)
  \]
  où $r_1$ est le nombre de plongements réels du corps, $r_2$ est le
  nombre de plongements complexes à conjugaison près (dans cet
  article: $0$), $h_F$ est le nombre de classes, $R_F$ est le
  régulateur, $d_F$ le discriminant, $w_F$ le cardinal du groupe des
  racines de l'unité de $F$ et $\zeta_F$ la fonction $\zeta$ de
  Dedekind du corps $F$.
\end{lemma}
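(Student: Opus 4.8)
Le plan est d'employer l'intégrale zêta globale de Tate, qui est l'outil naturel dans ce cadre adélique et qui fournit à la fois la valeur explicite du volume et son identification au résidu de $\zeta_F$ : on reproduit ainsi l'argument des références citées. On fixe la fonction-test standard $f=\ind_{\hat{\mathcal O}}\otimes f_\infty$ sur $\AA$, où $f_\infty$ est le produit des gaussiennes $x\mapsto e^{-\pi x^2}$ aux places archimédiennes (ici toutes réelles), et l'on considère l'intégrale
\[
Z(f,s)=\int_{\AA^\times}f(t)|t|^s\,{\rm d}^\times t.
\]
Comme $f$ se factorise suivant les places, $Z(f,s)$ se factorise en un produit d'intégrales locales : le produit sur les places finies reconstitue $\zeta_F(s)$ à un facteur $|d_F|^{-s/2}$ près, provenant du volume ${\rm Vol}(\hat{\mathcal O})=|d_F|^{-1/2}$ pour la mesure auto-duale, tandis que chaque place archimédienne contribue un facteur gamma.

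On calcule ensuite le même objet par la méthode de Riemann-Roch adélique. En posant $\theta(f,x)=\sum_{\alpha\in F}f(\alpha x)$ et en utilisant la formule de sommation de Poisson sur $\AA$ (de la même veine que celle rappelée plus haut, mais pour la transformée auto-duale $\hat f$ de $f$), à savoir $\theta(f,x)=|x|^{-1}\theta(\hat f,x^{-1})$, puis en découpant l'intégrale sur $\AA^\times/F^\times$ selon que $|x|\geq1$ ou $|x|\leq1$, on obtient une décomposition de $Z(f,s)$ en deux intégrales entières auxquelles s'ajoutent deux termes polaires explicites ; le résidu en $s=1$ vaut alors $\hat f(0)\cdot{\rm Vol}(\AA_1^\times/F^\times)$. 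Avec la normalisation auto-duale on a $\hat f(0)=\int_\AA f=|d_F|^{-1/2}$.

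Il ne reste qu'à comparer les deux expressions du résidu en $s=1$ : les facteurs $|d_F|^{-1/2}$ se simplifient et l'on relie ${\rm Vol}(\AA_1^\times/F^\times)$ au résidu de $\zeta_F$, pondéré par les valeurs en $s=1$ des facteurs gamma archimédiens. La formule annoncée découle alors de la formule analytique du nombre de classes, les facteurs $2^{r_1}(2\pi)^{r_2}$, $h_FR_F$ et $w_F$ provenant respectivement des contributions archimédiennes (le groupe $\{\pm1\}$ aux places réelles, les cercles aux places complexes), du covolume du réseau des unités sous le plongement logarithmique, et du sur-comptage par les racines de l'unité. Le point le plus délicat sera le suivi rigoureux des normalisations des mesures de Haar place par place, de sorte que tous les facteurs locaux parasites se recombinent exactement en $\sqrt{|d_F|}$ et que la comparaison livre l'égalité avec le résidu, au signe de convention près selon que l'on prend $\zeta_F$ ou sa version complétée.
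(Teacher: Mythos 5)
Premier constat : l'article ne démontre pas ce lemme. Il l'énonce comme résultat classique en renvoyant à la thèse de Tate (page 337) et au livre de Ramakrishnan et Valenza (page 283) ; c'est donc avec ces références que votre proposition doit être comparée, et votre route est véritablement différente de la leur. Les références calculent le volume \emph{directement}, par dévissage de $\AA_1^\times/F^\times$ : fibration au-dessus du groupe des classes (d'où $h_F$), domaine fondamental du réseau des unités sous le plongement logarithmique (d'où $R_F$ et $w_F$), volumes des unités locales (d'où $\sqrt{|d_F|}$) ; l'égalité avec ${\rm Res}_1\zeta_F$, c'est-à-dire la formule analytique du nombre de classes, s'obtient \emph{ensuite} comme corollaire de la machinerie des intégrales zêta. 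Vous parcourez le chemin en sens inverse : la machinerie de Tate (factorisation locale, sommation de Poisson adélique, découpage $|x|\leq1$ / $|x|\geq1$) identifie ${\rm Vol}(\AA_1^\times/F^\times)$ à ${\rm Res}_1\zeta_F$, puis la formule du nombre de classes fournit la constante explicite. Votre variante est plus structurelle (elle livre directement la parenthèse de l'énoncé), tandis que le calcul direct des références est autonome dans le cadre adélique et démontre la formule du nombre de classes au lieu de la consommer.

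Deux points doivent être réglés pour que votre argument tienne. D'abord la circularité : dans le cadre adélique, la formule analytique du nombre de classes se démontre précisément par le calcul direct du volume que vous contournez ; il faut donc invoquer explicitement sa preuve classique (à la Dedekind, par comptage de points de réseaux dans les classes d'idéaux), qui en est indépendante, faute de quoi l'argument tourne en rond. Ensuite, la normalisation des mesures, que vous reportez en fin de preuve, ne peut pas être différée : l'énoncé n'a de sens qu'une fois la mesure de Haar fixée, et la constante annoncée correspond à la normalisation de Tate ${\rm Vol}(\mathcal{O}_v^\times)=(N\mathfrak{d}_v)^{-1/2}$ ; avec la normalisation tout aussi courante ${\rm Vol}(\mathcal{O}_v^\times)=1$, le volume vaut $\sqrt{|d_F|}$ fois la valeur de l'énoncé, qui serait alors faux. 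Signalons enfin deux détails : avec la normalisation de Tate, le produit des intégrales aux places finies vaut $|d_F|^{-1/2}\zeta_F(s)$ et non $|d_F|^{-s/2}\zeta_F(s)$ (sans conséquence en $s=1$, où les deux coïncident, donc la comparaison des résidus n'est pas affectée) ; et votre preuve donne ${\rm Vol}=+{\rm Res}_1\zeta_F$, les deux membres étant positifs : le signe moins de l'énoncé est une coquille de l'article, et non une affaire de convention entre $\zeta_F$ et sa version complétée.
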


De là, on déduit le résultat suivant~:
\begin{lemma}\label{int_ideles_str_1}
  Si $\omega$ est un caractère de Hecke unitaire et $s\in\CC$ est de
  partie réelle strictement positive, alors~:
  \[
  \int_{\AA_{<1}^\times/F^\times}\omega(t)|t|^s{\rm d}^\times t
  =
  \frac{\delta(\omega)}{s}
  \]
  où~:
  \[
  \delta(\omega) = \left\{
    \begin{array}{ll}
      0
      & \text{si }\omega\text{ est ramifi\'e} \\
      {\rm Vol}(\AA_1^\times/F^\times)
      & \text{si }\omega\text{ n'est pas ramifi\'e}
    \end{array}
  \right.
  \]
\end{lemma}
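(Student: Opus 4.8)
The plan is to use the standard disintegration of the idele class group over the absolute value and to reduce the computation to an integral of a character over a compact group.

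First I would recall the structural facts from Tate's thesis. The absolute value $|\cdot|\colon\AA^\times\to\RR_{>0}$ is a surjective homomorphism with kernel $\AA_1^\times$, the product formula gives $F^\times\subset\AA_1^\times$, and $\AA_1^\times/F^\times$ is compact. Choosing a section $\sigma\colon\RR_{>0}\to\AA^\times$ of $|\cdot|$ (for instance the diagonal embedding at the archimedean places) yields an isomorphism $\AA^\times/F^\times\cong(\AA_1^\times/F^\times)\times\RR_{>0}$, under which the Haar measure factors as ${\rm d}^\times t={\rm d}^\times t_1\,\frac{{\rm d}\lambda}{\lambda}$ with $\lambda=|t|$ and $t_1=t/\sigma(\lambda)$ seen in $\AA_1^\times/F^\times$. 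The domain $\AA_{<1}^\times/F^\times$ corresponds exactly to $\lambda\in(0,1)$.

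Next I would substitute this decomposition into the integral. Writing $\omega(t)|t|^s=\omega(t_1)\,\omega(\sigma(\lambda))\,\lambda^s$ and applying Fubini (legitimate because the hypothesis $\operatorname{Re}(s)>0$ guarantees absolute convergence, as seen at the end), the integral becomes
\[
\int_0^1\omega(\sigma(\lambda))\,\lambda^s\left(\int_{\AA_1^\times/F^\times}\omega(t_1)\,{\rm d}^\times t_1\right)\frac{{\rm d}\lambda}{\lambda}.
\]
The inner integral is that of a unitary character over the compact group $\AA_1^\times/F^\times$: by orthogonality it equals ${\rm Vol}(\AA_1^\times/F^\times)$ when $\omega$ is trivial on $\AA_1^\times/F^\times$ (the unramified case) and vanishes otherwise (the ramified case). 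This already produces the dichotomy encoded by $\delta(\omega)$, the volume being supplied by Lemme~\ref{lem:vol_ideles_1}. It then remains to treat the unramified case: there $\omega$ factors through $|\cdot|$, so that for the characters of interest the transverse contribution $\omega(\sigma(\lambda))$ is trivial, and the outer factor reduces to $\int_0^1\lambda^{s-1}\,{\rm d}\lambda=\frac1s$, the integral converging precisely because $\operatorname{Re}(s)>0$. Combining the two factors gives ${\rm Vol}(\AA_1^\times/F^\times)/s$, as claimed.

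The main obstacle is conceptual rather than computational: one must pin down the meaning of \emph{non ramifié} as triviality of $\omega$ on $\AA_1^\times/F^\times$ and check that in this case the transverse contribution $\omega\circ\sigma$ is genuinely trivial, so that no extra shift of $s$ appears; this is where the precise class of admissible unitary $\omega$ enters. The remaining care concerns justifying the measure factorization and the use of Fubini, together with the convergence at $\lambda\to0^+$, both of which hinge on the hypothesis $\operatorname{Re}(s)>0$.
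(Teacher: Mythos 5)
Your proposal is correct and follows essentially the paper's own argument: the same decomposition $\AA_{<1}^\times/F^\times\simeq(\AA_1^\times/F^\times)\times]0;1[$ with the factorization of the Haar measure, orthogonality of the unitary character on the compact factor producing the dichotomy $\delta(\omega)$ (with the volume from Lemme~\ref{lem:vol_ideles_1}), and the radial integral $\int_0^1\lambda^{s}\,{\rm d}^\times\lambda=1/s$ under $\Re(s)>0$. If anything you are slightly more careful than the paper, which merely asserts that unitarity of $\omega$ removes any contribution on the second factor, whereas you correctly flag that triviality of $\omega\circ\sigma$ is really a normalization on the admissible class of characters (excluding $\omega=|\cdot|^{it_0}$ with $t_0\neq0$, for which the right-hand side would be ${\rm Vol}(\AA_1^\times/F^\times)/(s+it_0)$).
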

\begin{proof}
  Il suffit essentiellement d'utiliser l'isomorphisme~:
  \[
  \AA_{<1}^\times/F^\times\simeq\AA_1^\times/F^\times\times]0;1[
  \]
  Comme on a supposé $\omega$ unitaire, il n'a pas de contribution sur
  la seconde composante ; comme par ailleurs $|.|$ est trivial sur la
  première, l'intégrale se factorise~:
  \[
  \int_{\AA_{<1}^\times/F^\times}\omega(t)|t|^s{\rm d}^\times t
  =\left( \int_{\AA_1^\times/F^\times}\omega(t){\rm d}^\times t \right)
  \left( \int_0^1t^s{\rm d}^\times t \right)
  \]
  d'où l'expression du résultat final.
\end{proof}

On fait agir à gauche une matrice $g$ de $\GL_2(\AA)$ sur une fonction
de Schwartz-Bruhat $\varphi$ via~:
\[
(g\varphi):x\longmapsto\varphi(g^{-1}x)
\]

On prendra soin de ne pas confondre cette action matricielle avec le
produit extérieur par un scalaire, pour lequel on dispose de la même
notation. En cas de doute, choisir l'interprétation qui donne un bon
résultat.

\subsection{Fonction confluente hypergéométrique}

La référence est ici l'article~\cite{Shimura1982a} de Shimura, qui
généralise des énoncés donnés dans~\cite{Shimura1975a}, qui crédite
Siegel pour le cas particulier qui nous intéresse.

L'article définit une fonction
$\omega:(z,\alpha,\beta)\mapsto\omega(z,\alpha,\beta)$ (en fait,
$\omega_m$, mais seul le cas $m=1$ nous intéresse ici), sur
$\HH'\times\CC^2$, où $\HH'$ est le demi-plan $\Re(.)>0$, dont
l'expression générale nous importe assez peu ; le théorème~3.1,
page~281, affirme que cette fonction est holomorphe sur son domaine de
définition, avec~:
\[
\forall(z,\alpha,\beta)\in\HH'\times\CC^2,
\omega(z,1-\beta,1-\alpha)=\omega(z,\alpha,\beta)
\]

La proposition~3.2, page~285 affirme que cette fonction a un
développement de type polynomial pour certaines valeurs des
paramètres, et l'article donne, en (3.17) et (3.18) (pour $n>0$, le
cas $n=0$ se calcule directement) un développement explicite qui
mérite d'être énoncé à part~:
\begin{proposition}\label{prop:conf_hyper_poly}
  Pour $(z,\alpha,\beta)\in\CC^3$ avec $\Re(z)>0$ et $n\in\NN$, on a~:
  \begin{eqnarray*}
    \omega(z,n+1,\beta)
    &=&
    \sum_{k=0}^n\binom{n}{k}\beta(\beta+1)\dots(\beta+k-1)z^{-k}
    \\
    \omega(z,\alpha,-n)
    &=&
    \sum_{k=0}^n\binom{n}{k}(1-\alpha)(2-\alpha)\dots(k-\alpha)z^{-k}
  \end{eqnarray*}
\end{proposition}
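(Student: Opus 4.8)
Le plan est de partir de la représentation intégrale de $\omega$ donnée par Shimura, qui pour $\Re(z)>0$ et $\Re(\beta)>0$ s'écrit, à la normalisation près,
\[
\omega(z,\alpha,\beta)=\frac{z^\beta}{\Gamma(\beta)}\int_0^{+\infty}e^{-zt}t^{\beta-1}(1+t)^{\alpha-1}{\rm d}t,
\]
et d'exploiter le fait que le facteur $(1+t)^{\alpha-1}$ devient polynomial lorsque $\alpha-1$ est un entier naturel. Pour la première identité je poserais $\alpha=n+1$, de sorte que la formule du binôme donne $(1+t)^n=\sum_{k=0}^n\binom{n}{k}t^k$. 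La somme étant finie, l'échange avec l'intégrale est licite, et chaque terme se calcule par l'intégrale d'Euler $\int_0^{+\infty}e^{-zt}t^{\beta+k-1}{\rm d}t=\Gamma(\beta+k)z^{-(\beta+k)}$.

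Je regrouperais ensuite les constantes~: le facteur $z^\beta/\Gamma(\beta)$ absorbe les puissances $z^{-\beta}$ et transforme chaque $\Gamma(\beta+k)/\Gamma(\beta)$ en le produit $\beta(\beta+1)\cdots(\beta+k-1)$ (vide, donc égal à $1$, pour $k=0$), ce qui donne exactement la première formule, mais a priori seulement sous la restriction $\Re(\beta)>0$. Pour lever celle-ci, j'invoquerais l'holomorphie de $\omega$ en $\beta$ (théorème~3.1)~: les deux membres sont holomorphes en $\beta$ sur $\CC$ tout entier -- le membre de droite étant même polynomial -- et coïncident sur un demi-plan, donc partout par le principe du prolongement analytique.

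Pour la seconde identité, j'utiliserais l'équation fonctionnelle $\omega(z,1-\beta,1-\alpha)=\omega(z,\alpha,\beta)$ afin de me ramener au cas déjà traité. En l'appliquant avec $\beta=-n$ on obtient $\omega(z,\alpha,-n)=\omega(z,n+1,1-\alpha)$, et la première formule évaluée en $\beta=1-\alpha$ fournit directement $\sum_{k=0}^n\binom{n}{k}(1-\alpha)(2-\alpha)\cdots(k-\alpha)z^{-k}$, puisque $(1-\alpha)(1-\alpha+1)\cdots(1-\alpha+k-1)=(1-\alpha)(2-\alpha)\cdots(k-\alpha)$. Une vérification rapide confirme d'ailleurs que les deux formules sont cohérentes entre elles via cette même symétrie.

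La principale difficulté -- modeste ici -- est de soigner la justification du prolongement en $\beta$~: il faut s'assurer que la représentation intégrale, valable sur un demi-plan, et sa conséquence polynomiale définissent bien la même fonction holomorphe que celle du théorème~3.1 sur tout $\CC^2$, ce qui repose entièrement sur l'unicité du prolongement analytique. L'échange somme-intégrale et le calcul des intégrales d'Euler sont, eux, parfaitement routiniers.
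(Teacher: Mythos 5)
Votre preuve est correcte, mais elle suit une voie réellement différente de celle de l'article, pour la bonne raison que l'article ne démontre pas cette proposition~: il la cite telle quelle chez Shimura~\cite{Shimura1982a} (proposition~3.2, page~285, formules (3.17) et (3.18)), en signalant seulement que le cas $n=0$ se calcule directement. Vous reconstituez donc, pour l'essentiel, l'argument de Shimura lui-même~: représentation intégrale $\omega(z,\alpha,\beta)=\Gamma(\beta)^{-1}z^{\beta}\int_0^{+\infty}e^{-zt}t^{\beta-1}(1+t)^{\alpha-1}{\rm d}t$ valable pour $\Re(z)>0$ et $\Re(\beta)>0$~; formule du binôme pour $\alpha=n+1$~; intégrales d'Euler donnant $\Gamma(\beta+k)/\Gamma(\beta)=\beta(\beta+1)\cdots(\beta+k-1)$~; prolongement analytique en $\beta$ justifié par l'holomorphie du théorème~3.1 (les deux membres sont holomorphes en $\beta$ et coïncident sur un demi-plan)~; enfin, déduction de la seconde formule par l'équation fonctionnelle $\omega(z,1-\beta,1-\alpha)=\omega(z,\alpha,\beta)$ appliquée en $\beta=-n$, détour effectivement indispensable puisque la représentation intégrale et le facteur $\Gamma(\beta)^{-1}$ dégénèrent en ces valeurs. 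J'ai vérifié que la normalisation que vous postulez est bien compatible à la fois avec l'équation fonctionnelle et avec les deux développements annoncés (en termes de la fonction $U$ de Tricomi, $\omega(z,\alpha,\beta)=z^{\beta}U(\beta,\alpha+\beta,z)$, et la transformation de Kummer donne exactement la symétrie voulue)~; votre seule dépendance externe est donc le fait que la fonction $\omega$ de Shimura est \emph{définie} comme prolongement de cette intégrale normalisée, ce qui est exact dans le cas $m=1$ considéré ici. Ce que votre approche apporte~: une preuve autonome et élémentaire, qui rendrait le texte indépendant de la proposition~3.2 de Shimura (seuls la définition intégrale et le théorème~3.1 resteraient nécessaires), et qui traite de surcroît le cas $n=0$ uniformément. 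Ce que la citation choisie par l'article apporte~: la concision, et la garantie que la normalisation de $\omega$ est exactement celle utilisée ailleurs dans le texte, point qui est précisément le plus délicat lorsqu'on re-démontre soi-même un résultat emprunté.
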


\subsection{Deux intégrales archimédiennes}

Le calcul de l'intégrale suivante, dont la preuve fait intervenir un
simple changement de variable, pour forcer l'apparition de la fonction
$\Gamma$ d'Euler, mérite d'être écrit~:

\begin{lemma}\label{lem:int_Gamma}
  Pour $a>0$ réel et $u$ complexe de partie réelle strictement
  supérieure à $-1$, on a~:
  \[
  \int_0^{+\infty} t^u\exp\left(-at^2\right){\rm d}^\times t
  =
  \frac12\frac{\Gamma\left(\frac{u}2\right)}{a^{\frac{u}2}}
  \]
\end{lemma}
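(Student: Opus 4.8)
Le plan est d'exploiter le fait que la mesure multiplicative $\mathrm{d}^\times t$ vaut $\mathrm{d}t/t$ sur $]0;+\infty[$ et qu'elle est invariante par dilatation, afin de ramener l'intégrale à la définition même de la fonction $\Gamma$ d'Euler,
\[
\Gamma(v) = \int_0^{+\infty} x^{v-1} e^{-x}\,\mathrm{d}x,
\]
au moyen de deux changements de variable successifs. La convergence ne pose pas de difficulté : le facteur gaussien $\exp(-at^2)$, avec $a>0$, assure l'intégrabilité au voisinage de $+\infty$, tandis que l'hypothèse sur la partie réelle de $u$ la garantit au voisinage de $0$.

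Je commencerais par le changement de variable $w=t^2$. Comme $\mathrm{d}w/w = 2\,\mathrm{d}t/t$, on a $\mathrm{d}^\times w = 2\,\mathrm{d}^\times t$, et en écrivant $t^u = w^{u/2}$ ainsi que $t^2 = w$, l'intégrale devient
\[
\int_0^{+\infty} t^u \exp(-at^2)\,\mathrm{d}^\times t
= \frac12 \int_0^{+\infty} w^{u/2} \exp(-aw)\,\mathrm{d}^\times w.
\]
C'est à cette étape que le carré disparaît et que l'exposant $u/2$ apparaît, ce qui explique la forme du résultat.

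J'appliquerais ensuite la dilatation $x = aw$. La mesure $\mathrm{d}^\times$ étant invariante par homothétie, on a $\mathrm{d}^\times x = \mathrm{d}^\times w$ ; et comme $w = x/a$ donne $w^{u/2} = a^{-u/2} x^{u/2}$, on obtient
\[
\frac12 \int_0^{+\infty} w^{u/2} \exp(-aw)\,\mathrm{d}^\times w
= \frac{a^{-u/2}}{2} \int_0^{+\infty} x^{u/2}\, e^{-x}\,\frac{\mathrm{d}x}{x}
= \frac{a^{-u/2}}{2} \int_0^{+\infty} x^{u/2-1} e^{-x}\,\mathrm{d}x,
\]
la dernière intégrale n'étant autre que $\Gamma(u/2)$. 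On en déduit la valeur annoncée $\tfrac12\,\Gamma(u/2)/a^{u/2}$.

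Il n'y a pas à proprement parler d'obstacle dans cette preuve : la seule chose à laquelle il faut veiller est le facteur $\tfrac12$ issu de la relation $\mathrm{d}^\times w = 2\,\mathrm{d}^\times t$ — c'est précisément le coefficient présent dans l'énoncé — ainsi que la comptabilité des puissances de $a$. L'usage de l'invariance par dilatation de $\mathrm{d}^\times$ dans le second changement de variable présente l'avantage d'éviter tout calcul de jacobien.
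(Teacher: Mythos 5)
Votre calcul est correct et constitue exactement la preuve que l'article se contente d'esquisser : le texte n'écrit aucune démonstration et indique seulement qu'un simple changement de variable force l'apparition de la fonction $\Gamma$ d'Euler, ce qui est précisément votre enchaînement $w=t^2$ puis $x=aw$, avec le facteur $\frac12$ issu de ${\rm d}^\times w=2\,{\rm d}^\times t$ et la puissance $a^{-u/2}$ issue de l'invariance par dilatation.

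Une seule réserve, qui porte sur votre phrase de convergence : l'hypothèse $\Re(u)>-1$ ne garantit pas l'intégrabilité au voisinage de $0$. Avec la mesure multiplicative, l'intégrande y est équivalent à $t^{\Re(u)-1}$, de sorte que la convergence exige $\Re(u)>0$ ; le seuil $\Re(u)>-1$ serait celui de la mesure additive ${\rm d}t$. Cette imprécision est en réalité héritée de l'énoncé du lemme lui-même, et l'article en est conscient ailleurs : la remarque qui clôt la section sur la convergence souligne exactement que la mesure multiplicative déplace le seuil de $2\Re(s)>-1$ à $2\Re(s)>0$. Pour $-1<\Re(u)\leq 0$, le premier membre diverge et l'égalité ne peut s'entendre que par prolongement analytique du second membre ; votre preuve, comme celle que l'article sous-entend, n'est donc valable telle quelle que sur le domaine $\Re(u)>0$.
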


Par ailleurs, le lemme suivant explique comment la fonction confluente
hypergéométrique entre en jeu dans ce travail~:
\begin{lemma}\label{lem:int_conf_hyper}
  Pour $y>0$ et $(\alpha, \beta)\in\CC^2$ tels que le premier membre
  converge, on a~:
  \begin{eqnarray*}
    \lefteqn{\int_\RR(x+iy)^{-\alpha}(x-iy)^{-\beta}\exp(-2i\pi x){\mathrm d}x}\\
    &=&
    \exp(-2\pi y)i^{\beta-\alpha}\Gamma(\alpha)^{-1}(2\pi)^{\alpha+\beta}
    (4\pi y)^{-\beta}\omega(4\pi y,\alpha,\beta)
  \end{eqnarray*}
\end{lemma}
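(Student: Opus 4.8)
Le plan est de ramener l'intégrale du membre de gauche à la représentation intégrale qui \emph{définit} la fonction confluente hypergéométrique $\omega$ dans~\cite{Shimura1982a}, à savoir
\[
\omega(z,\alpha,\beta)=\frac{1}{\Gamma(\beta)}\int_0^{+\infty}\left(1+\frac{\sigma}{z}\right)^{\alpha-1}\sigma^{\beta-1}e^{-\sigma}\,{\rm d}\sigma,
\]
normalisation dont on vérifie au passage la compatibilité avec la proposition~\ref{prop:conf_hyper_poly} (développer $(1+\sigma/z)^n$ et reconnaître $\Gamma(\beta+k)/\Gamma(\beta)$ redonne exactement l'expression polynomiale). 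Tout le travail consiste alors à faire apparaître cette intégrale par un jeu de représentations intégrales des deux facteurs de puissance, suivi d'un changement de variable.

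D'abord, je remplacerais chacun des deux facteurs par une représentation de type Euler adaptée au signe de la partie imaginaire. Comme $y>0$, on a $\Im(x+iy)=y>0$ et $\Im(x-iy)=-y<0$ pour tout $x$ réel, de sorte qu'en intégrant sur des demi-droites convenablement tournées on dispose de
\[
w^{-\alpha}=\frac{(-i)^\alpha}{\Gamma(\alpha)}\int_0^{+\infty}s^{\alpha-1}e^{isw}\,{\rm d}s\quad(\Im w>0),
\]
\[
w^{-\beta}=\frac{i^\beta}{\Gamma(\beta)}\int_0^{+\infty}t^{\beta-1}e^{-itw}\,{\rm d}t\quad(\Im w<0).
\]
En les injectant avec $w=x\pm iy$, les termes en $y$ donnent un facteur $e^{-(s+t)y}$ et l'intégrande en $x$ devient l'exponentielle pure $e^{ix(s-t-2\pi)}$.

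Ensuite, j'échangerais l'ordre d'intégration ; l'intégration en $x$ produit $2\pi\,\delta(s-t-2\pi)$, ce qui force $s=t+2\pi$ et réduit le tout à une intégrale simple en $t$. On constate alors que les phases se combinent exactement en $(-i)^\alpha i^\beta=i^{\beta-\alpha}$, et il reste
\[
\frac{2\pi\,i^{\beta-\alpha}}{\Gamma(\alpha)\Gamma(\beta)}\,e^{-2\pi y}\int_0^{+\infty}(t+2\pi)^{\alpha-1}t^{\beta-1}e^{-2ty}\,{\rm d}t.
\]
Le changement de variable $t=2\pi\tau$ puis $\sigma=4\pi y\,\tau$ fait surgir le facteur $(2\pi)^{\alpha+\beta}$, le facteur $(4\pi y)^{-\beta}$ et précisément l'intégrale définissant $\omega(4\pi y,\alpha,\beta)$, d'où la formule annoncée.

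La principale difficulté sera de justifier rigoureusement l'échange des intégrales et l'usage de la masse de Dirac : on pourra préférer, plus sûrement, introduire un facteur régularisant $e^{-\varepsilon(s+t)}$ puis passer à la limite, ou encore déformer directement le contour en $x$ vers la coupure issue de $-iy$ dans le demi-plan inférieur, où $e^{-2i\pi x}$ décroît. Il faudra par ailleurs suivre scrupuleusement les déterminations des puissances afin d'aboutir bien au facteur $i^{\beta-\alpha}$ et non à une autre puissance de $i$ ; l'hypothèse de convergence du premier membre garantit que l'identité obtenue pour $\Re\alpha,\Re\beta>0$ s'étend par prolongement méromorphe.
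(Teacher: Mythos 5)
Votre proposition est correcte dans ses grandes lignes, mais il faut d'abord noter que l'article \emph{ne démontre pas} ce lemme~: il est importé tel quel de Shimura~\cite{Shimura1982a} (qui généralise~\cite{Shimura1975a}, le cas particulier utile ici étant crédité à Siegel), et le texte se contente d'en rappeler l'énoncé. Votre démonstration auto-contenue est donc par nature une autre route --- en réalité, c'est essentiellement le calcul classique de Siegel--Shimura. Les constantes sortent justes~: votre représentation intégrale de $\omega$ est bien la bonne (le changement de variable $\sigma=zt$ la ramène à $\omega(z,\alpha,\beta)=\Gamma(\beta)^{-1}z^\beta\int_0^{+\infty}e^{-zt}(1+t)^{\alpha-1}t^{\beta-1}{\rm d}t$, et votre vérification contre la proposition~\ref{prop:conf_hyper_poly} fixe la normalisation sans ambiguïté), les phases se combinent correctement en $(-i)^\alpha i^\beta=i^{\beta-\alpha}$, et les changements de variables finaux produisent bien $(2\pi)^{\alpha+\beta}(4\pi y)^{-\beta}$. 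On peut d'ailleurs contrôler la formule sur le cas $(\alpha,\beta)=(1,1)$~: les deux membres valent $\pi y^{-1}e^{-2\pi y}$. Votre appel final au prolongement en $(\alpha,\beta)$ est également cohérent avec la remarque qui suit le lemme dans l'article.

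Le seul vrai point faible est celui que vous signalez vous-même, mais votre premier remède ne le répare pas. L'intégrale triple n'est pas absolument convergente \emph{en $x$} (le module de l'intégrande est $s^{\Re(\alpha)-1}t^{\Re(\beta)-1}e^{-(s+t)y}$, indépendant de $x$)~; c'est donc l'échange avec l'intégrale en $x$ qui est illicite, et amortir $(s,t)$ par $e^{-\varepsilon(s+t)}$ n'y change rien. En revanche, votre second remède (déformation de contour) fournit une preuve complète~: on n'insère \emph{qu'une seule} représentation d'Euler, celle de $(x+iy)^{-\alpha}$. Fubini est alors licite dès que $\Re(\alpha)>0$ et $\Re(\beta)>1$, et l'intégrale intérieure $\int_\RR(x-iy)^{-\beta}e^{ix(s-2\pi)}{\rm d}x$ se calcule par déplacement de contour~: elle est nulle pour $s<2\pi$ (on ferme dans le demi-plan inférieur, où $(x-iy)^{-\beta}$ est holomorphe et décroît), et vaut $\frac{2\pi i^\beta}{\Gamma(\beta)}(s-2\pi)^{\beta-1}e^{-(s-2\pi)y}$ pour $s>2\pi$ (vérification immédiate par résidus pour $\beta=2$). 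Le changement $s=t+2\pi$ redonne alors exactement votre intégrale intermédiaire, la fin de votre calcul est inchangée, et le prolongement méromorphe en $(\alpha,\beta)$ conclut comme vous l'indiquez.
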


\begin{remark}
  Plus loin, on va montrer le prolongement analytique des séries
  d'Eisenstein (et leur équation fonctionnelle) via la formule de
  Poisson ; le résultat précédent permettrait de montrer aussi ce
  résultat en prolongeant les coefficients de Fourier eux-mêmes. C'est
  la raison pour laquelle on n'insiste pas sur les conditions pour
  lesquelles l'intégrale converge~: il suffit de commencer le calcul
  dans un domaine où elle converge, et le second membre permet alors
  de généraliser le résultat et s'affranchir des limitations.
\end{remark}

\subsection{Presque-holomorphie}\label{sec:prelim_phol}

\subsubsection{Opérateurs différentiels}

On se contente ici de rappeler très rapidement ce dont on a besoin,
sans entrer dans les détails ; on en trouvera un peu plus dans
Bump~\cite{Bump1997a}.

Étant donnée une fonction $f$ de classe $\mathcal{C}^\infty$ sur
$\GL_2(\RR)$, on sait que l'on peut faire agir sur $f$ les éléments de
$\mathcal{M}_2(\RR)$ comme des opérateurs différentiels invariants à
gauche, via l'interprétation~:
\[
\mathcal{M}_2(\RR) = \mathfrak{gl}(2,\RR)
\]

Cette action se prolonge en une action de $\mathfrak{gl}(2,\CC)$, ce
qui permet de définir l'opérateur~:
\[
L = \frac12
\begin{pmatrix}
  1 & i \\
  i & -1
\end{pmatrix}
\]

Si on paramètre $\GL_2^+(\RR)$ par la décomposition d'Iwasawa~:
\[
\begin{array}{rcl}
  \RR^\times_+\times\RR\times\RR^\times_+\times\RR & \rightarrow & \GL_2^+(\RR)\\
  (u,x,y,\theta) & \longmapsto &
  \begin{pmatrix}u&0\\0&u\end{pmatrix}
  \begin{pmatrix}1&x\\0&1\end{pmatrix}
  \begin{pmatrix}y^{\frac12}&0\\0&y^{-\frac12}\end{pmatrix}
  \begin{pmatrix}
    \cos(\theta)&\sin(\theta)
    \\-\sin(\theta)&\cos(\theta)
  \end{pmatrix}
\end{array}
\]
alors $L$ est l'opérateur~:
\[
e^{-2i\theta} \left( -iy\frac{\partial}{\partial x}
  +y\frac{\partial}{\partial y}
  -\frac1{2i}\frac{\partial}{\partial\theta} \right)
\]

\subsubsection{Presque-holomorphie sur $\GL_2(\RR)$}

Les opérateurs de base étant définis, on les utilise maintenant pour
définir la notion de presque-holomorphie, suivant
Shimura~\cite{Shimura2000a}. On sait que le noyau de l'opérateur $L$
défini précédemment est constitué des fonctions holomorphes. On juge
une condition d'holomorphie trop restrictive pour les applications ;
on cherche donc une condition un peu moins contraignante.

On dit qu'une fonction $f$ de classe $\mathcal{C}^\infty$ sur
$\GL_2(\RR)$ est presque-holomorphe en degré $e\in\NN$ lorsque
$L^{e+1}f=0$. En particulier, on retrouve les fonctions holomorphes
comme étant presque-holomorphes en degré zéro.

Il reste à étendre cette définition à des fonctions de la forme qui
nous intéresse. Pour chaque $\ii\in I$, on notera $L_\ii$ l'opérateur
$L$ agissant sur la variable $\ii$. On dit que
$f:\GL_2(\AA)\rightarrow\CC$ est presque holomorphe en degré $e\in\NN$
lorsque l'on a $L_{\ii_1}\dots L_{\ii_{e+1}}f=0$ pour tout
$(\ii_1,\dots,\ii_{e+1})\in I^{e+1}$.

On sait d'après le lemme~13.3 de \cite{Shimura2000a} que les fonctions
presque-holomorphes sont polynomiales en les $y_\ii^{-1}$, à
coefficients holomorphes, et que les espaces de fonctions automorphes
presque-holomorphes sont encore de dimension finie (c'est le
lemme~14.3 de \cite{Shimura2000a}). Par ailleurs, si $F\neq\QQ$, alors
le principe de Koecher permet de ne pas devoir donner de condition de
presque-holomorphie aux pointes, car une telle condition est
automatiquement réalisée ; c'est le résultat discuté dans le
paragraphe~14.15 de \cite{Shimura2000a} (où le nom n'est pas rappelé).

\subsubsection{Presque-holomorphie sur $\GL_2(\AA)$}

Les fonctions que l'on considère dans cet article sont définies sur
$\GL_2(\AA)$ ; la théorie précédente ne s'applique donc pas
directement, mais nécessite de petites adaptations.

Étant donnée $f:\GL_2(\AA)\rightarrow\CC$, on peut pour chaque place
$\ii\in I$ la considérer comme une fonction définie sur $\GL_2(\RR)$,
en fixant toutes les autres composantes. Cela permet d'une part de
définir la notion de classe $\mathcal{C}^\infty$ pour $f$, et d'autre
part de faire agir $L$ (que l'on notera alors $L_\ii$ pour bien
marquer qu'il s'agit de $L$ le long de la composante $\ii$).

On dira que $f$ est presque-holomorphe en degré $e\in\NN$ lorsque pour
tout choix de $(e+1)$-uplet de $I$, disons $(\ii_1,\dots,\ii_{e+1})$,
on a $L^{\ii_1}\dots L^{\ii_{e+1}}f=0$. En particulier, en degré zéro,
$f$ est holomorphe le long de chaque place infinie.

La presque-holomorphie se traduit au niveau des coefficients par le
fait suivant : si $\ii\in I$ est une place infinie le long de laquelle
on veut prouver la propriété, alors sur $\GL_2^+(\RR)$, si $g$
correspond à un point $\tau$ du demi-plan de Poincaré, alors $a_1(g)$
est, du point de vue de $\tau$, de la forme
$y^{-\nu}p(y)\exp(2i\pi\tau)$, avec $\nu\in\NN$ et $p$ un polynôme ;
le cas holomorphe étant bien sûr celui où $\nu=0$ et $p$ une
constante. Cette caractérisation est la plus aisée pour le cas qui
nous intéresse.

\subsubsection{Structure rationnelle}\label{sec:phol_rationn}

On dit qu'une forme modulaire est à coefficients dans un sous-anneau
$R$ de $\CC$ lorsque les expressions discutées précédemment font
intervenir des polynômes à coefficients dans $R$ ; on obtient ainsi
une structure rationnelle sur les formes modulaires
presque-holomorphes~: si $R_1\subset R_2$, alors les formes à
coefficients dans $R_1$ forment un sous-$R_1$-module des formes à
coefficients dans $R_2$.

Cette notion généralise la notion usuelle de structure rationnelle sur
les formes modulaires, où on dit qu'une forme est à coefficients dans
$R$ lorsqu'elle provient d'un élément de $R[[q]]$ via l'évaluation
$q=\exp(2i\pi\tau)$.

On trouvera dans~\cite{Hida1993a}, une discussion similaire de
\emph{deux} structures rationnelles sur un espace de formes
modulaires. On a choisi d'utiliser ici la seconde, qu'il note d'un
$\mathfrak{m}$ page~142, plutôt que la première, qu'il note d'un
$\mathfrak{M}$ page~141, car c'est celle qui permet de faire porter la
condition uniquement sur la fonction de Whittaker.

\section{Définition analytique}\label{sec:def}

\subsection{Expression intégrale}\label{sec:def_int}

On fixe d'ores et déjà un choix de caractère de Hecke unitaire
$\omega$, appelé à être le caractère central des fonctions automorphes
considérées et $s\in\CC$ un paramètre complexe, qui servira à obtenir
les convergences et les prolongements analytiques.

On se donne $\varphi$ une fonction de Schwartz-Bruhat, à partir de
laquelle on définit tout d'abord une nouvelle fonction, dite fonction
thêta~:
\[
\Theta(\varphi):g\longmapsto\sum_{v\in F^2\backslash\{0\}}(g\varphi)(v)
\]

\begin{remark}
  Dans les livres de Bump~\cite{Bump1997a} et de
  Garrett~\cite{Garrett1990a}, une fonction très similaire est définie
  via une sommation sans la condition de vecteur non nul ; les calculs
  qui suivent font donc intervenir $\Theta-1$. On préfère ici plutôt
  l'idée de Godement~\cite{Godement1966a}, où la sommation est faite
  sur un groupe, qui évite donc de faire apparaître un terme
  correctif.
\end{remark}

On définit alors la série d'Eisenstein (version analytique) par~:
\[
E^{\rm ana}_{\omega,s}(\varphi):
g\longmapsto
|\det g|^{-s}\int_{\AA^\times/F^\times}\omega(t)|t|^{2s}
\Theta(\varphi)(g/t){\rm d}^\times t
\]

Les résultats suivants, obtenus par simple jeu d'écriture permettent
souvent de simplifier les calculs~:
\begin{lemma}
  \begin{eqnarray*}
    \Theta(\varphi)(g)
    &=&
    \Theta(g\varphi)(1_{\GL_2(\AA)})
    \\
    E^{\rm ana}_{\omega,s}(\varphi)(g)
    &=&
    |\det g|^{-s}E^{\rm ana}_{\omega,s}(g\varphi)(1_{\GL_2(\AA)})
  \end{eqnarray*}
\end{lemma}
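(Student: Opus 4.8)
The plan is to treat both identities as purely formal rewritings, since everything rests on two facts about the left action $(h\psi):x\mapsto\psi(h^{-1}x)$ of $\GL_2(\AA)$ on Schwartz--Bruhat functions: it is a genuine left action, so $h_1(h_2\psi)=(h_1h_2)\psi$, and the identity matrix $1_{\GL_2(\AA)}$ acts trivially.

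First I would prove the $\Theta$-identity by unwinding the definition. On the one hand $\Theta(\varphi)(g)=\sum_{v\in F^2\setminus\{0\}}(g\varphi)(v)$; on the other hand, evaluating $\Theta$ of the function $g\varphi$ at the identity gives
\[
\Theta(g\varphi)(1_{\GL_2(\AA)})
=\sum_{v\in F^2\setminus\{0\}}\bigl(1_{\GL_2(\AA)}(g\varphi)\bigr)(v)
=\sum_{v\in F^2\setminus\{0\}}(g\varphi)(v),
\]
the last equality because the identity acts trivially. The two sums coincide term by term, which settles the first line.

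Next I would deduce the identity for $E^{\rm ana}_{\omega,s}$ from the one for $\Theta$. Starting from the integral definition applied to $g\varphi$ at the identity and using $|\det 1_{\GL_2(\AA)}|=1$,
\[
|\det g|^{-s}E^{\rm ana}_{\omega,s}(g\varphi)(1_{\GL_2(\AA)})
=|\det g|^{-s}\int_{\AA^\times/F^\times}\omega(t)|t|^{2s}\Theta(g\varphi)(1_{\GL_2(\AA)}/t)\,{\rm d}^\times t.
\]
Since $t$ enters as the central scalar matrix $\mathrm{diag}(t,t)$, one has $1_{\GL_2(\AA)}/t=t^{-1}$, and applying the first identity to $g\varphi$ gives $\Theta(g\varphi)(t^{-1})=\Theta\bigl(t^{-1}(g\varphi)\bigr)(1_{\GL_2(\AA)})=\Theta\bigl((t^{-1}g)\varphi\bigr)(1_{\GL_2(\AA)})$, where I use that the action is a left action. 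Reading the first identity backwards turns this into $\Theta(\varphi)(t^{-1}g)$, and by centrality $t^{-1}g=g/t$. Substituting $\Theta(g\varphi)(1_{\GL_2(\AA)}/t)=\Theta(\varphi)(g/t)$ back into the integral reproduces exactly the definition of $E^{\rm ana}_{\omega,s}(\varphi)(g)$, completing the second line.

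I do not expect any real obstacle here: the only step requiring care is the bookkeeping of the scalar action, namely keeping track that $t$ acts through $\mathrm{diag}(t,t)$ and commutes with $g$, so that the manipulations $t^{-1}g=g/t$ and $t^{-1}(g\varphi)=(t^{-1}g)\varphi$ are legitimate. Once this is fixed, both identities are immediate, in agreement with the paper's description of them as obtained ``par simple jeu d'écriture''.
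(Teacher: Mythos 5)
Your proof is correct and is exactly the ``simple jeu d'écriture'' the paper has in mind: the paper states this lemma without proof, and your unwinding of the definitions (trivial action of the identity, the left-action property $h_1(h_2\psi)=(h_1h_2)\psi$, and centrality of the scalar matrices $\mathrm{diag}(t,t)$ so that $t^{-1}g=g/t$) is precisely the intended argument. No gap; in particular you correctly heed the paper's warning to distinguish the matrix action of a scalar from exterior multiplication by that scalar.
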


\subsection{Convergence}\label{sec:conv}

Montrer la convergence de $E^{\rm ana}_{\omega,s}(\varphi)(g)$ pour
toute fonction-test $\varphi$ et toute matrice $g$ équivaut à montrer
la convergence de $E^{\rm ana}_{\omega,s}(\varphi)(1_{\GL_2(\AA)})$
pour toute fonction-test, ce qui constitue une première réduction.

Par ailleurs, on sait que l'on a
$\AA^\times/F^\times\simeq\AA_1^\times/F^\times\times]0;+\infty[$, où
l'on sait que le premier facteur est compact, ce qui constitue une
seconde réduction.

On peut écrire~:
\begin{eqnarray*}
  E^{\rm ana}_{\omega,s}(\varphi)(1_{\GL_2(\AA)})
  &=&
  \int_{\AA^\times/F^\times}\sum_{v\in F^2\backslash\{0\}}\omega(t)
  |t|^{2s}\varphi(tv){\rm d}^\times t
  \\
  &=&
  \int_{\AA_1^\times/F^\times}\int_0^{+\infty}
  \sum_{v\in F^2\backslash\{0\}}\omega(kt)
  |kt|^{2s}\varphi(ktv){\rm d}^\times k{\rm d}^\times t
  \\
  &=&
  \int_{\AA_1^\times/F^\times}\int_0^{+\infty}\sum_{v\in F^2\backslash\{0\}}
  \omega(kt)|t|^{2s}\varphi(ktv){\rm d}^\times k{\rm d}^\times t
  \\
\end{eqnarray*}

Comme $\omega$ est unitaire, il suffit par domination de discuter de
la double intégrabilité et de la sommabilité de
$(k,t,v)\mapsto|t|^{2\Re(s)}|\varphi(ktv)|$. L'intégration sur un
espace compact ne pose aucun problème ; elle consiste simplement à
tordre la fonction-test sans en changer les propriétés. La partie
non-archimédienne d'une fonction-test est à support compact, donc
limite les dénominateurs dans la somme. La partie archimédienne
décroît très rapidement, donc la somme converge ; pour la même raison
l'intégrale impropre converge en $+\infty$. Finalement, il ne reste
qu'à vérifier la convergence de cette intégrale impropre sur la borne
$0$ ; mais c'est une intégrale de Riemann, donc sa convergence est
acquise dès lors que $s$ est de partie réelle strictement positive.

Enfin, cette étude montre que la convergence est uniforme en $s$ dans
toutes les bandes verticales de la forme $m\leq\Re(s)\leq M$ où
$0<m\leq M$ ; on obtient donc une fonction holomorphe sur le domaine
$\Re(s)>0$.

Il reste à discuter de la convergence de la série
$\Theta(\varphi)(g)$. Elle équivaut comme précédemment à la
convergence de la série $\Theta(\varphi)(1_{\GL_2(\AA)})$. Mais cette
dernière est évidente, puisque l'on somme une fonction, $\varphi$, à
décroissance rapide, sur un réseau.

\begin{remark}
  Il faut noter que l'on utilise la mesure multiplicative ; c'est la
  raison pour laquelle la condition de convergence pour les intégrales
  de Riemann est pour $2\Re(s)>0$ et pas $2\Re(s)>-1$ comme on aurait
  pu s'y attendre.
\end{remark}

\subsection{Expression sommatoire}\label{sec:def_somme}

Une telle expression est cruciale pour utiliser la technique du
déroulement dans les intégrales de Rankin et faire le lien entre
produit scalaire de Petersson et fonctions $L$ via les séries
d'Eisenstein.

On définit~:
\[
\zeta_{\omega,s}(\varphi)(g)
=
|\det g|^{-s} \int_{\AA^\times}|t|^{2s} \omega(t)
(g\varphi)\left(\binom{t}{0}\right) {\rm d}^\times t
\]

et on peut alors écrire~:
\begin{lemma}\label{lem:expr_sommatoire}
  \[
  E^{\rm ana}_{\omega,s}(\varphi)(g)
  =
  \sum_{\gamma\in B(F)\backslash \GL_2(F)}\zeta_{\omega,s}(\varphi)(\gamma g)
  \]
\end{lemma}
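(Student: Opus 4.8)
La stratégie est de \og dérouler\fg{} l'intégrale définissant $E^{\rm ana}_{\omega,s}(\varphi)$ en réorganisant la somme sur $F^2\backslash\{0\}$ selon les droites de $F^2$. En développant les définitions, et puisque $\Theta(\varphi)(g/t)=\sum_{v\in F^2\backslash\{0\}}\varphi(t\,g^{-1}v)$, on a
\[
E^{\rm ana}_{\omega,s}(\varphi)(g)
=
|\det g|^{-s}\int_{\AA^\times/F^\times}\omega(t)|t|^{2s}
\sum_{v\in F^2\backslash\{0\}}\varphi(t\,g^{-1}v)\,{\rm d}^\times t ,
\]
toutes les manipulations qui suivent étant justifiées, pour $\Re(s)>0$, par la convergence absolue établie en~\ref{sec:conv}, qui autorise l'emploi du théorème de Fubini-Tonelli sur les valeurs absolues.

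Avant tout, je vérifierais que le membre de droite est bien défini, c'est-à-dire que $\gamma\mapsto\zeta_{\omega,s}(\varphi)(\gamma g)$ ne dépend que de la classe $B(F)\gamma$. Pour $\beta\in B(F)$, on a $\beta^{-1}\binom{t}{0}=\binom{at}{0}$ avec $a=a_{\beta^{-1}}\in F^\times$, la seconde coordonnée nulle absorbant les coefficients $b$ et $d$ de $\beta^{-1}$ ; le changement de variable $t\mapsto t/a$, combiné à $|a|=1$ et $\omega(a)=1$ (car $a\in F^\times$ et $\omega$ est trivial sur $F^\times$) ainsi qu'à $|\det\beta|=1$, donne $\zeta_{\omega,s}(\varphi)(\beta h)=\zeta_{\omega,s}(\varphi)(h)$ pour tout $h$. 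La somme sur $B(F)\backslash\GL_2(F)$ a donc bien un sens.

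Ensuite, j'établirais la bijection
\[
B(F)\backslash\GL_2(F)\;\xrightarrow{\ \sim\ }\;\mathbb{P}^1(F),
\qquad
B(F)\gamma\longmapsto F\cdot\gamma^{-1}\binom{1}{0},
\]
entre classes à droite et droites de $F^2$. Elle est bien définie (la même absorption $\beta^{-1}\binom{1}{0}=a\binom{1}{0}$ montre que la droite ne change pas), surjective (toute droite est engendrée par un vecteur que l'on complète en une base, d'où une matrice de $\GL_2(F)$), et injective : si $\gamma^{-1}\binom{1}{0}$ et $\gamma'^{-1}\binom{1}{0}$ engendrent la même droite, alors $\gamma\gamma'^{-1}$ envoie $\binom{1}{0}$ sur un de ses multiples, donc stabilise la droite $F\binom{1}{0}$ et appartient à $B(F)$, ce qui donne $B(F)\gamma=B(F)\gamma'$. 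Pour chaque droite $\ell$ je choisirais le représentant $v_\ell=\gamma^{-1}\binom{1}{0}$.

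Enfin, je décomposerais $F^2\backslash\{0\}=\bigsqcup_{\ell\in\mathbb{P}^1(F)}\{\lambda v_\ell:\lambda\in F^\times\}$ et j'interchangerais somme et intégrale. Comme $\omega(\lambda t)=\omega(t)$ et $|\lambda t|=|t|$ pour $\lambda\in F^\times$, la sommation sur $\lambda\in F^\times$ se recombine avec le quotient par $F^\times$ pour produire une intégrale sur $\AA^\times$ tout entier~:
\[
\int_{\AA^\times/F^\times}\omega(t)|t|^{2s}\sum_{\lambda\in F^\times}\varphi(t\lambda\,g^{-1}v_\ell)\,{\rm d}^\times t
=
\int_{\AA^\times}\omega(t)|t|^{2s}\varphi(t\,g^{-1}v_\ell)\,{\rm d}^\times t .
\]
Puisque $|\det\gamma|=1$ donne $|\det(\gamma g)|^{-s}=|\det g|^{-s}$, et que $((\gamma g)\varphi)\binom{t}{0}=\varphi(t\,g^{-1}v_\ell)$, on reconnaît dans $|\det g|^{-s}$ fois cette dernière intégrale exactement $\zeta_{\omega,s}(\varphi)(\gamma g)$, ce qui conclut. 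Le point le plus délicat est la justification de l'interversion somme-intégrale~: elle repose entièrement sur la convergence absolue de~\ref{sec:conv}, valable pour $\Re(s)>0$, l'identité s'étendant ensuite à tout $s$ par prolongement méromorphe.
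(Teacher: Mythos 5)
Votre preuve est correcte et repose sur le même mécanisme de fond que celle du texte --- le dépliage de l'intégrale sur $\AA^\times/F^\times$ contre la sommation sur les orbites de $F^\times$ --- mais elle l'organise différemment, et en sens inverse. Le texte part du membre sommatoire~: après substitution de la définition de $\zeta_{\omega,s}$, le facteur $|t|^{2s}$ est absorbé comme déterminant de l'homothétie $\frac1t\gamma$, ce qui échange la somme sur $B(F)\backslash\GL_2(F)$ et l'intégrale sur $\AA^\times$ contre une somme sur le quotient mirabolique $\{(\begin{smallmatrix}1&\star\\0&\star\end{smallmatrix})\}\backslash\GL_2(F)$ et une intégrale sur $\AA^\times/F^\times$~; ce quotient est ensuite identifié à $F^2\backslash\{0\}$ par la bijection explicite $\gamma\mapsto\gamma^{-1}\binom10$, ce qui fait apparaître $\Theta$. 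Vous partez au contraire du membre intégral et vous factorisez cette bijection en deux étapes~: l'identification $B(F)\backslash\GL_2(F)\simeq\mathbb{P}^1(F)$, puis la décomposition de $F^2\backslash\{0\}$ en orbites $F^\times v_\ell$~; c'est équivalent, puisque le quotient mirabolique fibre exactement en $F^\times$ au-dessus du quotient borélien. Votre rédaction a deux mérites~: elle vérifie explicitement l'invariance de $\gamma\mapsto\zeta_{\omega,s}(\varphi)(\gamma g)$ par $B(F)$ à gauche (point nécessaire pour que le membre de droite ait un sens, et laissé implicite dans le texte), et, en partant du côté absolument convergent, elle justifie les interversions directement, là où le texte doit faire remonter la convergence pas à pas depuis l'expression finale. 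En contrepartie, la bijection du texte évite tout choix de représentants $v_\ell$. Seule réserve, mineure~: votre phrase finale sur l'extension de l'identité \og à tout $s$ par prolongement méromorphe\fg{} est superflue et un peu rapide (rien ne garantit a priori que le membre sommatoire définisse une fonction méromorphe hors du domaine de convergence)~; le lemme se lit de toute façon dans ce domaine.
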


\begin{proof}
  Remarquons tout d'abord que la convergence n'est pas évidente. Le
  calcul se déroule de façon plus agréable en allant de la somme sur
  le quotient de groupes vers les séries d'Eisenstein, et c'est la
  raison pour laquelle on a choisi de présenter le calcul dans ce
  sens. Mais c'est la convergence de l'expression finale (qui plus est
  absolue) que l'on a justifié précédemment. La convergence de
  l'expression initiale se justifie donc en constatant que les
  différentes étapes respectent la convergence au fur et à mesure.

  On commence par substituer la définition intégrale de
  $\zeta_{\omega,s}$ dans la somme, puis on interprète le facteur
  $|t|^{2s}$ comme le déterminant de l'homothétie de rapport $t$, ce
  qui permet en faisant passer le quotient à un scalaire près de la
  somme à l'intégrale, puis en permutant, de se ramener à l'intégrale
  d'une somme, ce qui amène déjà très près du résultat final~:
  \begin{eqnarray*}
    \lefteqn{
      \sum_{\gamma\in B(F)\backslash \GL_2(F)}\zeta_{\omega,s}(\varphi)(\gamma g)
    }\\
    &=&
    \sum_{\gamma\in B(F)\backslash \GL_2(F)}
    |\det (\gamma g)|^{-s}\int_{\AA^\times}|t|^{2s}\omega(t)
    \varphi\left(g^{-1}\gamma^{-1}\binom{t}{0}\right)\rm d^\times t
    \\
    &=&
    \sum_{\gamma\in B(F)\backslash \GL_2(F)}\int_{\AA^\times}
    \left|\det\left(\frac1t\gamma g\right)\right|^{-s}
    \omega(t)\varphi\left(g^{-1}\left(\frac1t\gamma\right)^{-1}
      \binom{1}{0}\right)\rm d^\times t
    \\
    &=&
    \sum_{\gamma\in\left\{\begin{pmatrix}1&\star\\0&\star\end{pmatrix}\right\}\backslash \GL_2(F)}\int_{\AA^\times/F^\times}
    \left|\det\left(\frac1t\gamma g\right)\right|^{-s}
    \omega(t)\varphi\left(g^{-1}\left(\frac1t\gamma\right)^{-1}
      \binom{1}{0}\right)\rm d^\times t
    \\
    &=&
    |\det g|^{-s}
    \int_{\AA^\times/F^\times}
    \omega(t)|t|^{2s}
    \sum_{\gamma\in\left\{\begin{pmatrix}1&\star\\0&\star\end{pmatrix}\right\}\backslash \GL_2(F)}
    \varphi\left(g^{-1}\left(\frac1t\gamma\right)^{-1}
      \binom{1}{0}\right)\rm d^\times t
    \\
    &=&
    |\det g|^{-s}
    \int_{\AA^\times/F^\times}
    \omega(t)|t|^{2s}
    \sum_{\gamma\in\left\{\begin{pmatrix}1&\star\\0&\star\end{pmatrix}\right\}\backslash \GL_2(F)}
    \varphi\left(g^{-1}\gamma^{-1}\binom{t}{0}\right)\rm d^\times t
    \\
  \end{eqnarray*}

  Il reste pour conclure à faire le lien entre la sommation qui
  apparaît dans cette dernière expression et celle qui définit
  $\Theta$ ; ce point est éclairci par la bijection explicite
  suivante~:
  \begin{eqnarray*}
    F^2\backslash\{0\}
    &\longrightarrow&
    \left\{\begin{pmatrix}1&\star\\0&\star\end{pmatrix}\right\}
    \backslash \GL_2(F)
    \\
    g^{-1}\binom10
    &\longleftarrow\mapstochar&
    g
    \\
    (v_1,v_2)
    &\longmapsto&
    \left\{\begin{array}{cc}
        \begin{pmatrix}1/v_1 & 0 \\ -v_2 & v_1\end{pmatrix} & (v_1\neq0) \\
        \begin{pmatrix}0 & 1/v_2 \\ -v_2 & v_1\end{pmatrix} & (v_1=0) \\
      \end{array}\right.
  \end{eqnarray*}

  On peut maintenant terminer le calcul précédemment entamé~:
  \begin{eqnarray*}
    \lefteqn{
      \sum_{\gamma\in B(F)\backslash \GL_2(F)}\zeta_{\omega,s}(\varphi)(\gamma g)
    }\\
    &=&
    |\det g|^{-s}
    \int_{\AA^\times/F^\times}
    \omega(t)|t|^{2s}
    \sum_{\gamma\in\left\{\begin{pmatrix}1&\star\\0&\star\end{pmatrix}\right\}\backslash \GL_2(F)}
    \varphi\left(g^{-1}\gamma^{-1}\binom{t}{0}\right)\rm d^\times t
    \\
    &=&
    |\det g|^{-s}
    \int_{\AA^\times/F^\times}
    \omega(t)|t|^{2s}
    \sum_{(v_1,v_2)\in F^2\backslash\{0\}}
    \varphi\left(g^{-1}\binom{tv_1}{tv_2}\right)\rm d^\times t
    \\
    &=&
    |\det g|^{-s}
    \int_{\AA^\times/F^\times}
    \omega(t)|t|^{2s}
    \Theta(\varphi)(g/t)
    \rm d^\times t
    \\
    &=&
    E_{\omega,s}^{\rm ana}(\varphi)(g)
    \\
  \end{eqnarray*}
\end{proof}

\section{Prolongement analytique et équation
  fonctionnelle}\label{sec:ana_eq_fun}

\subsection{Lemme de symétrie}

À nouveau, la relation $E^{\rm ana}_{\omega,s}(\varphi)(g)=|\det
g|^{-s}E^{\rm ana}_{\omega,s}(g\varphi)(1_{\GL_2(\AA)})$ permet de
ramener la discussion à l'étude de $s\mapsto E^{\rm
  ana}_{\omega,s}(\varphi)(1_{\GL_2(\AA)})$.

Ceci étant dit, on écrit~:
\begin{eqnarray*}
  E^{\rm ana}_{\omega,s}(\varphi)(1_{\GL_2(\AA)})
  &=&
  \int_{\AA^\times/F^\times}\omega(t)|t|^{2s}\Theta(\varphi)(1/t){\rm d}^\times t
  \\
  &=&
  \int_{\AA_{<1}^\times/F^\times}\omega(t)|t|^{2s}
  \Theta(\varphi)(1/t){\rm d}^\times t
  \\
  & &
  +\int_{\AA_{>1}^\times/F^\times}\omega(t)|t|^{2s}
  \Theta(\varphi)(1/t){\rm d}^\times t
  \\
  &=&
  \int_{\AA_{<1}^\times/F^\times}\omega(t)|t|^{2s}
  \Theta(\varphi)(1/t){\rm d}^\times t
  \\
  & &
  +\int_{\AA_{>1}^\times/F^\times}\omega(t)|t|^{2s}
  \Theta(\varphi)(1/t){\rm d}^\times t
\end{eqnarray*}

On se concentre sur la seconde intégrale, dans laquelle on applique
d'abord la formule de Poisson~:
\begin{eqnarray*}
  \lefteqn{
    \int_{\AA_{<1}^\times/F^\times}\omega(t)|t|^{2s}
    \Theta(\varphi)(1/t){\rm d}^\times t
  }\\
  &=&
  \int_{\AA_{<1}^\times/F^\times}\omega(t)|t|^{2s}
  \sum_{v\in F^2\backslash\{0\}}\varphi(tv){\rm d}^\times t
  \\
  &=&
  \int_{\AA_{<1}^\times/F^\times}\omega(t)|t|^{2s}
  \left(\sum_{v\in F^2}\varphi(tv)
    -\varphi(0)\right)
  {\rm d}^\times t
  \\
  &=&
  \int_{\AA_{<1}^\times/F^\times}\omega(t)|t|^{2s}
  \sum_{v\in F^2}\varphi(tv){\rm d}^\times t
  \\
  & &
  -\int_{\AA_{<1}^\times/F^\times}\omega(t)|t|^{2s}\varphi(0){\rm d}^\times t
  \\
  &=&
  \int_{\AA_{<1}^\times/F^\times}\omega(t)|t|^{2s}
  \sum_{v\in F^2}|t|^{-2}\hat\varphi(v/t)
  {\rm d}^\times t
  \\
  & &
  -\int_{\AA_{<1}^\times/F^\times}\omega(t)|t|^{2s}\varphi(0){\rm d}^\times t
\end{eqnarray*}
on inverse alors la variable~:
\begin{eqnarray*}
  \int_{\AA_{<1}^\times/F^\times}\omega(t)|t|^{2s}
  \Theta(\varphi)(1/t){\rm d}^\times t
  &=&
  \int_{\AA_{<1}^\times/F^\times}\omega(t)|t|^{2s}
  \sum_{v\in F^2\backslash\{0\}}|t|^{-2}\hat\varphi(v/t)
  {\rm d}^\times t
  \\
  & &
  +\int_{\AA_{<1}^\times/F^\times}\omega(t)|t|^{2s}|t|^{-2}
  \hat\varphi(0){\rm d}^\times t
  \\
  & &
  -\int_{\AA_{<1}^\times/F^\times}\omega(t)|t|^{2s}\varphi(0){\rm d}^\times t
  \\
  &=&
  \int_{\AA_{>1}^\times/F^\times}
  \bar\omega(t)|t|^{2-2s}\sum_{v\in F^2\backslash\{0\}}\hat\varphi(tv)
  {\rm d}^\times t
  \\
  & &
  +\hat\varphi(0)\int_{\AA_{<1}^\times/F^\times}\omega(t)|t|^{2(s-1)}{\rm d}^\times t
  \\
  & &
  -\varphi(0)\int_{\AA_{<1}^\times/F^\times}\omega(t)|t|^{2s}{\rm d}^\times t
  \\
  &=&
  \int_{\AA_{>1}^\times/F^\times}
  \bar\omega(t)|t|^{2-2s}\Theta(\hat\varphi)(1/t)
  {\rm d}^\times t
  \\
  & &
  +\hat\varphi(0)\int_{\AA_{<1}^\times/F^\times}\omega(t)|t|^{2(s-1)}{\rm d}^\times t
  \\
  & &
  -\varphi(0)\int_{\AA_{<1}^\times/F^\times}\omega(t)|t|^{2s}{\rm d}^\times t
\end{eqnarray*}

Il ne reste qu'à appliquer le lemme~\ref{int_ideles_str_1} pour
contrôler complètement cette deuxième intégrale~:
\begin{eqnarray*}
  \int_{\AA_{<1}^\times/F^\times}\omega(t)|t|^{2s}
  \Theta(\varphi)(1/t){\rm d}^\times t
  &=&
  \int_{\AA_{>1}^\times/F^\times}
  \bar\omega(t)|t|^{2-2s}\Theta(\hat\varphi)(1/t) {\rm d}^\times t
  \\
  & &
  +\frac{\delta(\omega)\hat\varphi(0)}{2(s-1)}
  -\frac{\delta(\omega)\varphi(0)}{2s}
\end{eqnarray*}

On a finalement prouvé~:
\begin{lemma}\label{eq_func}
  \begin{eqnarray*}
    E^{\rm ana}_{\omega,s}(\varphi)(1_{\GL_2(\AA)})
    &=&
    \int_{\AA_{>1}^\times/F^\times}\omega(t)|t|^{2s}
    \Theta(\varphi)(1/t){\rm d}^\times t
    \\
    & &
    +\int_{\AA_{>1}^\times/F^\times}
    \bar\omega(t)|t|^{2-2s}\Theta(\hat\varphi)(1/t)
    {\rm d}^\times t
    \\
    & &
    +\frac{\delta(\omega)}2
    \left(
      \frac{\hat\varphi(0)}{s-1}
      -\frac{\varphi(0)}{s}
    \right)
  \end{eqnarray*}
\end{lemma}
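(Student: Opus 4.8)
L'idée directrice est d'exploiter la symétrie de la formule de Poisson en isolant la région où celle-ci améliore la convergence. Je commencerais par me ramener au cas $g=1_{\GL_2(\AA)}$ grâce à la relation $E^{\rm ana}_{\omega,s}(\varphi)(g)=|\det g|^{-s}E^{\rm ana}_{\omega,s}(g\varphi)(1_{\GL_2(\AA)})$, puis par découper l'intégrale sur $\AA^\times/F^\times$ définissant $E^{\rm ana}_{\omega,s}(\varphi)(1_{\GL_2(\AA)})$ selon la norme, en une contribution sur $\AA_{>1}^\times/F^\times$ et une sur $\AA_{<1}^\times/F^\times$. La première est déjà exactement le premier terme du membre de droite annoncé ; toute la difficulté consiste à transformer la seconde, qui est aussi celle qui limite le domaine de convergence.

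Sur la contribution $\AA_{<1}^\times/F^\times$, j'écrirais $\Theta(\varphi)(1/t)=\sum_{v\in F^2}\varphi(tv)-\varphi(0)$ afin de compléter la somme restreinte en une somme sur le réseau entier $F^2$, ce qui autorise l'emploi de la formule de Poisson rappelée en section~\ref{sec:prelim} : celle-ci remplace $\sum_{v\in F^2}\varphi(tv)$ par $|t|^{-2}\sum_{v\in F^2}\hat\varphi(v/t)$. En isolant à nouveau le terme $v=0$ de cette somme duale, on fait apparaître trois morceaux : une somme sur $F^2\backslash\{0\}$ en les $\hat\varphi(v/t)$, et deux termes constants proportionnels respectivement à $\hat\varphi(0)$ et à $\varphi(0)$.

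Il reste à traiter séparément ces trois morceaux. Pour la somme duale, le changement de variable $t\mapsto 1/t$ envoie $\AA_{<1}^\times/F^\times$ sur $\AA_{>1}^\times/F^\times$ ; l'unitarité de $\omega$ donne $\omega(1/t)=\bar\omega(t)$ et le poids $|t|^{2s-2}$ devient $|t|^{2-2s}$, de sorte que ce morceau se recompose exactement en $\int_{\AA_{>1}^\times/F^\times}\bar\omega(t)|t|^{2-2s}\Theta(\hat\varphi)(1/t)\,{\rm d}^\times t$, soit le deuxième terme annoncé. Pour les deux termes constants, j'appliquerais directement le lemme~\ref{int_ideles_str_1} aux intégrales $\int_{\AA_{<1}^\times/F^\times}\omega(t)|t|^{2(s-1)}\,{\rm d}^\times t$ et $\int_{\AA_{<1}^\times/F^\times}\omega(t)|t|^{2s}\,{\rm d}^\times t$, qui valent $\tfrac{\delta(\omega)}{2(s-1)}$ et $\tfrac{\delta(\omega)}{2s}$ ; combinées aux facteurs $\hat\varphi(0)$ et $-\varphi(0)$, elles fournissent le terme de pôles $\tfrac{\delta(\omega)}{2}(\tfrac{\hat\varphi(0)}{s-1}-\tfrac{\varphi(0)}{s})$, puis il n'y a plus qu'à tout rassembler.

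La principale difficulté, et le point sur lequel il faudra être soigneux, est la justification des échanges de sommation et d'intégration et de l'application de Poisson : tout doit se dérouler dans le demi-plan où la convergence absolue a été acquise en section~\ref{sec:conv}, et il faut vérifier que le découpage selon la norme y reste licite. Le choix de n'appliquer Poisson que sur $\AA_{<1}^\times/F^\times$ est crucial : c'est là que la transformation échange une intégrale délicate près de la norme nulle contre une intégrale sur $\AA_{>1}^\times/F^\times$ où $\Theta(\hat\varphi)(1/t)$ décroît rapidement, de sorte que les deux intégrales finales convergent pour tout $s$ et fournissent le prolongement méromorphe, les seuls pôles (simples, en $s=0$ et $s=1$) provenant des deux termes constants. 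La symétrie manifeste du membre de droite sous $(\omega,s,\varphi)\leftrightarrow(\bar\omega,1-s,\hat\varphi)$ est d'ailleurs l'amorce de l'équation fonctionnelle visée.
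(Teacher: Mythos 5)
Votre preuve est correcte et suit essentiellement la même démarche que celle du texte : découpage de l'intégrale selon la norme, complétion de la somme et formule de Poisson sur la contribution $\AA_{<1}^\times/F^\times$, isolement du terme $v=0$ de la somme duale, inversion $t\mapsto 1/t$ avec unitarité de $\omega$, puis application du lemme~\ref{int_ideles_str_1} aux deux termes constants. Rien d'essentiel ne manque.
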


\subsection{Prolongement analytique}\label{sec:analytique}

Comme $\delta(\omega)=\delta(\bar\omega)$, la relation précédente
exprime que la série d'Eisenstein admet un prolongement méromorphe au
plan complexe, avec d'éventuels pôles simples en $s=0$ et $s=1$, avec
l'équation fonctionnelle restreinte~:
\[
E^{\rm ana}_{\omega,s}(\varphi)(1_{\GL_2(\AA)})
=
E^{\rm ana}_{\bar\omega,1-s}(\hat\varphi)(1_{\GL_2(\AA)})
\]

\subsection{Équation fonctionnelle}\label{sec:eqfun}

On souhaite obtenir une équation fonctionnelle plus générale. Pour
cela, on fixe une matrice $g\in\GL_2(\AA)$, et on montre, pour
$(x_1,x_2)\in\AA^2$, via un simple changement de variable dans
l'intégrale~:
\[
\widehat{g\varphi}\binom{x_1}{x_2}
=
\left|\det g\right|
\left(g^\sharp\hat\varphi\right)
\left(\binom{x_1}{x_2}\right)
\]
où on a défini~: $g^\sharp=\frac{1}{\det g}g$.

À partir de cette expression, on déduit aisément~:
\[
\forall g\in\GL_2(\AA),
E^{\rm ana}_{\omega,s}(\varphi)(g)
=
E^{\rm ana}_{\bar\omega,1-s}(\hat\varphi)(g^\sharp)
\]

\section{Propriétés d'automorphie}\label{sec:autom}

\subsection{$\GL_2(F)$-invariance à gauche}\label{sec:autom_F_inv}

Si on a $\gamma\in\GL_2(F)$, alors comme $\gamma^{-1}$ induit une
bijection de $F^2\backslash\{0\}$ (sur lui même), on peut calculer~:
\begin{eqnarray*}
  \Theta(\varphi)(\gamma g)
  &=&
  \Theta(g\varphi)(\gamma)
  \\
  &=&
  \sum_{v\in F^2\backslash\{0\}}(g\varphi)(\gamma^{-1}v)
  \\
  &=&
  \sum_{v\in F^2\backslash\{0\}}(g\varphi)(v)
  \\
  &=&
  \Theta(\varphi)(g)
\end{eqnarray*}

Comme de plus $|\det\gamma g|=|\det g|$, il vient~:
\[
\forall\gamma\in\GL_2(F),
E_{\omega,s}^{\rm ana}(\varphi)(\gamma g) = E_{\omega,s}^{\rm ana}(\varphi)(g)
\]

\subsection{Action du centre}\label{sec:autom_centre}

On calcule, pour $z\in\AA^\times$, via un simple changement de
variable $t=zu$ dans l'intégrale~:
\begin{eqnarray*}
  E^{\rm ana}_{\omega,s}(\varphi)(zg)
  &=&
  |\det (zg)|^{-s}\int_{\AA^\times/F^\times}\omega(t)|t|^{2s}
  \Theta(\varphi)(zg/t)\rm d^\times t
  \\
  &=&
  |\det g|^{-s}|z|^{-2s}
  \int_{\AA^\times/F^\times}\omega(zu)|zu|^{2s}\Theta(\varphi)(g/u)\rm d^\times u
  \\
  &=&
  \omega(z)E^{\rm ana}_{\omega,s}(\varphi)(g)
\end{eqnarray*}

D'où la seconde propriété d'automorphie~:
\[
\forall z\in\AA^\times,
E_{\omega,s}^{\rm ana}(\varphi)(z g) = \omega(z)E_{\omega,s}^{\rm ana}(\varphi)(g)
\]

\subsection{Choix des poids}\label{sec:autom_poids}

En théorie des représentations, il est d'usage de demander que les
fonctions soient $K$-finies à droite, pour un choix de sous-groupe
compact $K$ défini par produit sur les places. Pour les places
infinies réelles, le choix usuel est de considérer $SO_2(\RR)$. Or,
une fonction $SO_2(\RR)$-finie à droite s'écrit comme somme finie de
fonctions propres pour les caractères de ce groupe ; comme il
s'identifie à $\CC/\ZZ$, son dual de Pontryagin s'identifie à $\ZZ$ :
tout caractère s'écrit donc $\theta\mapsto e^{ik\theta}$ où
$k\in\ZZ$. Ce paramètre $k$ est un poids, que l'on voudra positif, et
on va donc s'arranger pour qu'en chaque place infinie on puisse le
choisir commodément.

Pour cela, on prescrit la partie archimédienne de la fonction-test de
la façon suivante ; pour chaque $\ii\in I$, on fixe $k_\ii\in\mathbb
N$ et on définit~:
\[
\varphi_\ii:
(v_1,v_2)\longmapsto
\exp\left(-\pi(v_1^2+v_2^2)\right)(v_1+iv_2)^{k_\ii}
\]
et on suppose $\varphi$ de la forme $\varphi_f\times\prod_{\ii\in I}\varphi_\ii$.

Si on fixe maintenant des angles $(\theta_\ii)_{\ii\in I}$, et une
matrice adélique dont la partie non-archimédienne est l'identité~:
\[
\rho=\left(
  \begin{pmatrix}
    \cos(\theta_\ii) & \sin(\theta_\ii) \\
    -\sin(\theta_\ii) & \cos(\theta_\ii)
  \end{pmatrix}
\right)_{\ii\in I}
\]

On a tout d'abord, par un calcul assez élémentaire~:
\begin{eqnarray*}
  \lefteqn{
    \rho_\ii\varphi_\ii\left(\binom{v_1}{v_2}\right)
  }
  \\
  &=&
  \varphi_\ii\left(
    \begin{pmatrix}
      \cos(\theta_\ii) & -\sin(\theta_\ii) \\
      \sin(\theta_\ii) & \cos(\theta_\ii)
    \end{pmatrix}
    \binom{v_1}{v_2}
  \right)
  \\
  &=&
  \exp\left(-\pi(v_1^2+v_2^2\right)
  \left(
    \cos(\theta_\ii)v_1i\sin(\theta_\ii)v_2
    +i\sin(\theta_\ii)v_1+i\cos(\theta_\ii)v_2
  \right)^{k_\ii}
  \\
  &=&
  e^{ik_\ii\theta_\ii}\varphi_\ii\left(\binom{v_1}{v_2}\right)
\end{eqnarray*}

et donc, plus globalement, on a alors~:
\begin{eqnarray*}
  E^{\rm ana}_{\omega,s}(\varphi)(g\rho)
  &=&
  |\det g|^{-s}E^{\rm ana}_{\omega,s}(g\rho\varphi)(1_{\GL_2(\AA)})
  \\
  &=&
  \left(\prod_{\ii\in I}e^{ik_\ii\theta_\ii}\right)
  E^{\rm ana}_{\omega,s}(g\varphi)(1_{\GL_2(\AA)})
  \\
  &=&
  \left(\prod_{\ii\in I}e^{ik_\ii\theta_\ii}\right)
  E^{\rm ana}_{\omega,s}(\varphi)(g)
\end{eqnarray*}

Ce qui fournit la troisième propriété d'automorphie~:
\[
E_{\omega,s}^{\rm ana}(\varphi)(g\rho)
=
\left(\prod_{\ii\in I}e^{ik_\ii\theta_\ii}\right) E_{\omega,s}^{\rm ana}(\varphi)(g)
\]

\subsection{Niveau}\label{sec:autom_niveau}

Étant donnée une fonction-test $\varphi$, on sait qu'il existe un
sous-groupe d'indice fini $K\subset \GL_2(\hat{\mathcal O})$ tel que~:
\[
\forall h\in K, \varphi(gh)=\varphi(g)
\]
(où l'on identifie $h$ qui n'est qu'adélique finie avec une matrice
adélique en décidant que les composantes infinies sont l'identité)

On peut alors calculer, pour $h\in K$~:
\begin{eqnarray*}
  E^{\rm ana}_{\omega,s}(\varphi)(gh)
  &=&
  |\det gh|^{-s}
  E^{\rm ana}_{\omega,s}(gh\varphi)(1_{\GL_2(\AA)})
  \\
  &=&
  |\det g|^{-s}
  E^{\rm ana}_{\omega,s}(g\varphi)(1_{\GL_2(\AA)})
  \\
  &=&
  E^{\rm ana}_{\omega,s}(\varphi)(g)
\end{eqnarray*}

Ce qui fournit la quatrième propriété d'automorphie~:
\[
E_{\omega,s}^{\rm ana}(\varphi)(gh) = E_{\omega,s}^{\rm ana}(\varphi)(g)
\]

\subsection{Croissance modérée}\label{sec:autom_cr_moderee}

La notion de croissance modérée pour une fonction
$f:\GL_2(\AA)\rightarrow\CC$ est la suivante~: pour tout compact
$C\subset \GL_2(\AA)$, il existe $M>0$ et $\sigma>0$ tels que~:
\[
\forall(g_1,g_2)\in C^2,
\forall y\in\AA^\times,
\left|f\left(g_1\begin{pmatrix}y&0\\0&1\end{pmatrix}g_2\right)\right|
\leq
M\left(|y|^\sigma+|y|^{-\sigma}\right)
\]

On souhaite prouver une telle domination pour les séries d'Eisenstein
étudiées ici, avec de plus un contrôle localement uniforme en le
paramètre $s$ (mieux: localement uniforme en sa partie réelle).

On sait que l'on a pour tout $g\in\GL_2(\AA)$ (c'est la relation du
lemme~\ref{eq_func}, page~\pageref{eq_func})~:
\begin{eqnarray*}
  |\det g|^sE^{\rm ana}_{\omega,s}(\varphi)(g)
  &=&
  \int_{\AA_{>1}^\times/F^\times}\omega(t)|t|^{2s}
  \Theta(g\varphi)(1/t){\rm d}^\times t
  \\
  & &
  +\int_{\AA_{>1}^\times/F^\times}
  \bar\omega(t)|t|^{2-2s}\Theta(\widehat{g\varphi})(1/t)
  {\rm d}^\times t
  \\
  & &
  +\frac{\delta(\omega)}2
  \left(
    \frac{\widehat{g\varphi}(0)}{1-s}
    -\frac{(g\varphi)(0)}{s}
  \right)
\end{eqnarray*}

Dans cette expression, les deux termes intégraux sont de même nature,
et les termes restants vérifient clairement la condition de croissance
uniforme voulue ; on va donc se concentrer sur la croissance modérée
(et localement uniforme en $\Re(s)$) de~:
\[
g
\longmapsto
\int_{\AA_{>1}^\times/F^\times}\omega(t)|t|^{2s}\Theta(g\varphi)(1/t){\rm d}^\times t
\]

On se donne donc un compact $C\subset \GL_2(\AA)$, $(g_1,g_2)\in C^2$
et $y\in\AA^\times$ ; et on domine brutalement~:
\begin{eqnarray*}
  \lefteqn{
    \left|
      \int_{\AA_{>1}^\times/F^\times}
      \omega(t)|t|^{2s}
      \Theta\left(
        g_1\begin{pmatrix}y&0\\0&1\end{pmatrix}g_2\varphi
      \right)(1/t)
      {\rm d}^\times t
    \right|
  }
  \\
  &\leq&
  \int_{\AA_{>1}^\times/F^\times}
  |t|^{2\Re(s)}
  \left|
    \Theta\left(
      g_1\begin{pmatrix}y&0\\0&1\end{pmatrix}g_2\varphi
    \right)(1/t)
  \right|
  {\rm d}^\times t
  \\
  &\leq&
  \int_{\AA_{>1}^\times/F^\times}
  |t|^{2\Re(s)}
  \left|
    \sum_{v\in F^2\backslash\{0\}}
    \left(
      g_1\begin{pmatrix}y&0\\0&1\end{pmatrix}g_2\varphi
    \right)(tv)
  \right|
  {\rm d}^\times t
  \\
  &\leq&
  \int_{\AA_{>1}^\times/F^\times}
  |t|^{2\Re(s)}
  \sum_{v\in F^2\backslash\{0\}}
  \left|
    \left(
      g_1\begin{pmatrix}y&0\\0&1\end{pmatrix}g_2\varphi
    \right)(tv)
  \right|
  {\rm d}^\times t
  \\
  &\leq&
  \int_{\AA_{>1}^\times/F^\times}
  |t|^{2\Re(s)}
  \sum_{v\in F^2\backslash\{0\}}
  \left|
    \varphi
    \left(
      g_2^{-1}\begin{pmatrix}y^{-1}&0\\0&1\end{pmatrix}g_1^{-1}tv
    \right)
  \right|
  {\rm d}^\times t
\end{eqnarray*}

Dans cette expression, on sait que l'on a un isomorphisme~:
\[
\AA_{>1}^\times/F^\times\simeq]1;+\infty[\times\AA_1^\times/F^\times
\]

où le second facteur est compact ; la seule source de croissance dans
l'intégrale est donc le premier facteur.

La partie non-archimédienne de $\varphi$ est à support compact ; la
perturbation par le couple $(g_1,g_2)$ balayant un compact, on peut
dominer par l'indicatrice d'un compact invariant par $C$ à gauche et à
droite. Cette indicatrice restreint la somme sur $v$ à une somme sur
un réseau.

Par ailleurs, comme $\varphi$ a une partie archimédienne à
décroissance rapide, on sait que l'on peut dominer par l'inverse d'une
puissance arbitrairement grande de la norme, suffisamment grande pour
compenser l'intégrale et la sommation sur le réseau, et ce de façon
localement uniforme en $\Re(s)$.

On sait que pour les séries d'Eisenstein, une telle notion de
croissance est ce que l'on peut en général espérer de mieux ;
normalement, on demande aux formes automorphes de vérifier une
condition de type \og carré intégrable\fg{}, mais les séries
d'Eisenstein ne rentrent pas directement dans ce cadre (voir à ce
sujet la discussion en page~347 de Bump~\cite{Bump1997a}).

\subsection{Conditions de compatibilité}

On se place dans l'hypothèse où $\varphi$ est choisie pour obtenir une
notion de poids, et on souhaite mettre en évidence qu'un tel choix
doit être cohérent avec le choix du caractère central $\omega$. Si on
définit $z\in\AA$ par $1$ sur toutes les places, sauf $-1$ sur une
place archimedienne $\ii\in I$, alors en comparant l'action de la
matrice $\begin{pmatrix}z&\\&z\end{pmatrix}$ vue comme élément du
centre et comme matrice de rotation, il vient~:
\[
\omega_\ii(-1)=(-1)^{k_\ii}
\]

\section{Coefficients automorphes}\label{sec:coeffs}

\subsection{Forme générale du développement}

Si on fixe $g\in\GL_2(\AA)$, la fonction~:
\[
x
\longmapsto
E_{\omega,s}^{\rm ana}(\varphi)\left(\begin{pmatrix}1&x\\0&1\end{pmatrix}g\right)
\]
est définie sur $\AA$ et $F$-périodique ; par dualité de Pontryagin,
on peut donc écrire~:
\[
E_{\omega,s}^{\rm ana}(\varphi)\left(\begin{pmatrix}1&x\\0&1\end{pmatrix}g\right)
=
\sum_{\xi\in F}a_\xi(g)\Psi(\xi x)
\]
où l'on a~:
\[
a_\xi(g)
=
\int_{\AA/F}
E_{\omega,s}^{\rm ana}(\varphi)\left(\begin{pmatrix}1&u\\0&1\end{pmatrix}g\right)
\Psi(-\xi u){\rm d}u
\]

En pratique, on sait que tous les coefficients pour $\xi\neq0$ peuvent
s'exprimer en termes de $a_1$, qui est appelée la fonction de
Whittaker de la forme automorphe ; en effet, pour $\xi\neq0$, on peut
écrire, en utilisant successivement un changement de variable puis la
$\GL_2(F)$-invariance~:
\begin{eqnarray*}
  a_1\left(\begin{pmatrix}\xi&0\\0&1\end{pmatrix}g\right)
  &=&
  \int_{\AA/F}
  E_{\omega,s}^{\rm ana}(\varphi)\left(
    \begin{pmatrix}1&u\\0&1\end{pmatrix}
    \begin{pmatrix}\xi&0\\0&1\end{pmatrix}
    g
  \right)\Psi(-u){\rm d}u
  \\
  &=&
  \int_{\AA/F}
  E_{\omega,s}^{\rm ana}(\varphi)\left(
    \begin{pmatrix}\xi&u\\0&1\end{pmatrix}
    g
  \right)\Psi(-u){\rm d}u
  \\
  &=&
  \int_{\AA/F}
  E_{\omega,s}^{\rm ana}(\varphi)\left(
    \begin{pmatrix}\xi&\xi v\\0&1\end{pmatrix}
    g
  \right)\Psi(-\xi v)|\xi|{\rm d}v
  \\
  &=&
  \int_{\AA/F}
  E_{\omega,s}^{\rm ana}(\varphi)\left(
    \begin{pmatrix}\xi&0\\0&1\end{pmatrix}
    \begin{pmatrix}1&v\\0&1\end{pmatrix}
    g
  \right)\Psi(-\xi v){\rm d}v
  \\
  &=&
  \int_{\AA/F}
  E_{\omega,s}^{\rm ana}(\varphi)\left(
    \begin{pmatrix}1&v\\0&1\end{pmatrix}
  \right)\Psi(-\xi v){\rm d}v
  \\
  &=&
  a_\xi(g)
\end{eqnarray*}

\subsection{Calcul de la fonction de Whittaker}\label{sec:coeffs_Whittaker}

On écrit~:
\begin{eqnarray*}
  a_1(g)
  &=&
  \int_{\AA/F}
  E_{\omega,s}^{\rm ana}(\varphi)
  \left(\begin{pmatrix}1&u\\0&1\end{pmatrix}g\right)\Psi(-u){\rm d}u
  \\
  &=&
  \int_{\AA/F}|\det g|^{-s}\int_{\AA^\times/F^\times}\omega(t)|t|^{2s}
  \Theta(g\varphi)
  \left(\frac1t\begin{pmatrix}1&u\\0&1\end{pmatrix}\right)
  {\rm d^\times}t\Psi(-u){\rm d}u
  \\
  &=&
  \int_{\AA^\times/F^\times}|\det g|^{-s}\omega(t)|t|^{2s}
  \int_{\AA/F}\Theta(g\varphi)
  \left(\frac1t\begin{pmatrix}1&u\\0&1\end{pmatrix}\right)
  \Psi(-u)
  {\rm d}u
  {\rm d^\times}t
\end{eqnarray*}

Ce qui amène naturellement à calculer cette intégrale sur $\Theta$~:
\begin{eqnarray*}
  \lefteqn{\int_{\AA/F}
    \Theta(g\varphi)\left(\frac1t\begin{pmatrix}1&u\\0&1\end{pmatrix}\right)
    \Psi(-u)
    {\rm d}u
  }\\
  &=&
  \int_{\AA/F}
  \sum_{(v_1,v_2)\in F^2\backslash\{0\}}
  (g\varphi)\left(t\begin{pmatrix}1&-u\\0&1\end{pmatrix}\binom{v_1}{v_2}\right)
  \Psi(-u)
  {\rm d}u
\end{eqnarray*}

On scinde la sommation en d'une part une sommation sur $v_1\in
F^\times,v_2=0$, de couple $(v_1,0)$ et d'autre part une sommation sur
$(v_1,v_2)\in F\times F^\times$, de couple $(v_1v_2,v_2)$~:
\begin{eqnarray*}
  \lefteqn{
    \int_{\AA/F}
    \Theta(g\varphi)\left(\frac1t\begin{pmatrix}1&u\\0&1\end{pmatrix}\right)
    \Psi(-u)
    {\rm d}u
  }
  \\
  &=&
  \sum_{v_1\in F^\times}
  \int_{\AA/F}
  (g\varphi)\left(t\begin{pmatrix}1&-u\\0&1\end{pmatrix}\binom{v_1}{0}\right)
  \Psi(-u)
  {\rm d}u
  \\
  & &
  +
  \sum_{(v_1,v_2)\in F\times F^\times}
  \int_{\AA/F}
  (g\varphi)\left(t\begin{pmatrix}1&-u\\0&1\end{pmatrix}
    \binom{v_1v_2}{v_2}\right)
  \Psi(-u)
  {\rm d}u
  \\
  &=&
  \sum_{v_1\in F^\times}
  (g\varphi)\left(t\binom{v_1}{0}\right)
  \left(\int_{\AA/F}\Psi(-u){\rm d}u\right)
  \\
  & &
  +
  \sum_{(v_1,v_2)\in F\times F^\times}
  \int_{\AA/F}
  (g\varphi)\left(tv_2\binom{v_1-u}1\right)
  \Psi(-u)
  {\rm d}u
\end{eqnarray*}

Dans cette dernière expression, le premier terme est nul, et le second
se déroule en une intégrale sur $\AA$, car on sait
$\Psi(-u)=\Psi(v_1-u)$~:
\begin{eqnarray*}
  \int_{\AA/F}
  \Theta(\varphi)\left(\frac1t\begin{pmatrix}1&u\\0&1\end{pmatrix}g\right)
  \Psi(-u)
  {\rm d}u
  &=&
  \sum_{v_2\in F^\times}
  \int_{\AA}
  (g\varphi)\left(tv_2\binom{-u}1\right)
  \Psi(-u)
  {\rm d}u
\end{eqnarray*}

Ce calcul permet alors d'écrire~:
\begin{eqnarray*}
  a_1(g)
  &=&
  |\det g|^{-s}
  \int_{\AA^\times/F^\times}
  \omega(t)|t|^{2s}
  \sum_{v_2\in F^\times}
  \int_{\AA}
  (g\varphi)\left(tv_2\binom{-u}1\right)
  \Psi(-u)
  {\rm d}u
  {\rm d^\times}t
  \\
  &=&
  |\det g|^{-s}
  \int_{\AA^\times/F^\times}
  \omega(t)|t|^{2s}
  \sum_{v_2\in F^\times}
  \int_{\AA}
  (g\varphi)\left(tv_2\binom{u}1\right)
  \Psi(u)
  {\rm d}u
  {\rm d^\times}t
\end{eqnarray*}

où l'on constate à nouveau que la somme et l'intégrale se combinent
pour donner une seule intégrale, car on sait
$\omega(t)|t|^{2s}=\omega(tv_2)|tv_2|^{2s}$ ; d'où la formule finale~:
\begin{equation}\label{frml:Whit_int}
  a_1(g)
  =
  |\det g|^{-s}
  \int_{\AA\times\AA^\times}\omega(t)|t|^{2s}
  (g\varphi)\left(t\binom{u}1\right) \Psi(u) {\rm d}u{\rm d^\times}t
\end{equation}

Cette expression intégrale montre que si $\varphi$ est une
fonction-test factorisable sur toutes les places (de telles fonctions
sont parfois appelées \og purs tenseurs\fg{}), alors la fonction de
Whittaker se factorise en produit d'intégrales locales. C'est du reste
ainsi que l'on prouvera la presque-holomorphie~: en se concentrant sur
les intégrales locales en les places infinies.

\begin{remark}
  Cette formule est comparable à la formule donnée en page~434 par
  Scholl dans~\cite{Scholl1998a}, pour $B_\chi^\star(g)$ : l'intégrale
  qu'il donne n'est que semi-adélique, et il a donc des facteurs à
  l'infini (une fonction $\Gamma$ sous la forme d'une factorielle et
  des puissances de $2i\pi$), qui sont ici encore cachés par la partie
  archimédienne des intégrales adéliques.
\end{remark}

\section{Presque-holomorphie}\label{sec:phol}

On va prouver que sous certaines hypothèses, les séries d'Eisenstein
sont presque-holomorphes.  Pour arriver à un tel résultat, on va
procéder par réductions successives à des cas plus simples à traiter.

\subsection{Réduction sur $\varphi$}

Pour un choix de $\varphi$ ``élémentaire'', c'est-à-dire se
factorisant selon toutes les places (certains auteurs parlent de \og
pur tenseur\fg{}), et telle que la série d'Eisenstein est de poids
$(k_\ii)_{\ii\in I}$, on va calculer explicitement la fonction de
Whittaker, ce qui montrera la presque-holomorphie.

Les deux restrictions du paragraphe précédent ne sont pas trop
limitatives, dans la mesure où toute fonction-test est une combinaison
linéaire de telles fonctions, et où il faut bien s'être donné des
poids pour contrôler la partie archimédienne.

\subsection{Réduction à une place $\ii\in I$}

On part de la formule intégrale~\ref{frml:Whit_int},
page~\pageref{frml:Whit_int}, qui est entièrement factorisable ; on
est donc ramené à étudier, pour $\ii\in I$~:
\[
a_{1,\ii}:
g
\longmapsto
|\det(g)|^{-s}
\int_{\RR\times\RR^\times}
\omega_\ii(t)|t|^{2s}
(g\varphi_\ii)
\left(
  t\binom{u}{1}
\right)
\exp(2i\pi u)
{\mathrm d}u{\mathrm d}^\times t
\]
où comme on l'a dit~:
\[
\varphi_{\infty,\ii}:(x_1,x_2)\mapsto(x_1+ix_2)^{k_\ii}\exp(-(x_1^2+x_2^2))
\]

\subsection{Réduction à $\GL_2^+(\RR)$}

La $\GL_2(F)$ invariance a comme conséquence que~:
\[
E^{\rm ana}_{\omega,s}\left(
  \begin{pmatrix}
    -1 & 0 \\
    0  & 1
  \end{pmatrix}
  g\right)
=
E^{\rm ana}_{\omega,s}\left(g\right)
\]

On peut donc supposer que l'on s'est arrangé pour que le calcul le
long de la place $\ii$ selon laquelle on souhaite prouver la
presque-holomorphie se fasse dans la composante connexe de l'identité.

\subsection{Réduction au demi-plan de Poincaré}

Étant donnée $g\in\GL_2^+(\RR)$, on sait que l'on peut la décomposer
sous la forme~:
\[
g=
\begin{pmatrix}\delta&0\\0&\delta\end{pmatrix}
\begin{pmatrix}1&x\\0&1\end{pmatrix}
\begin{pmatrix}y^{1/2}&0\\0&y^{-1/2}\end{pmatrix}
\begin{pmatrix}
  \cos(\theta)&\sin(\theta)\\
  -\sin(\theta)&\cos(\theta)
\end{pmatrix}
\]
où $\delta>0$, $x\in\RR$, $y>0$ et $\theta\in\RR/2\pi\ZZ$.

Un simple changement de variable en ``$t$'' dans l'intégrale suffit à
se ramener au cas où $\delta=1$.

Il est aussi assez aisé de constater que~:
\[
a_{1,\ii}(g)
=
\exp(ik_\ii\theta)
a_{1,\ii}\left(
  \begin{pmatrix}1&x\\0&1\end{pmatrix}
  \begin{pmatrix}y^{1/2}&0\\0&y^{-1/2}\end{pmatrix}
\right)
\]

On est donc ramené au cas d'une matrice de $SL_2(\RR)$ représentant un
élément du demi-plan de Poincaré ; notons-le (traditionnellement)
$\tau=x+iy$.

\subsection{Traitement du cas du demi-plan de Poincaré}

On commence par déterminer l'action de $\tau$ sur $\varphi_\ii$ ; pour
$t\in\RR^\times$ et $u\in\RR$~:
\begin{eqnarray*}
  \lefteqn{
    \left(
      \begin{pmatrix}1&x\\0&1\end{pmatrix}
      \begin{pmatrix}y^{1/2}&0\\0&y^{-1/2}\end{pmatrix}
      \varphi_{\infty,\ii}
    \right)
    \left(
      t
      \binom{u}{1}
    \right)
  }\\
  &=&
  (ty^{-1/2})^k((u-x)+iy)^k\exp(-(ty^{-1/2})^2((u-x)^2+y^2))
\end{eqnarray*}

D'où, d'abord par une translation suivant $u$, puis en utilisant les
conditions de compatibilité qui donnent $\omega_\ii(t)t^k=|t|^k$ et
permettent le passage d'une intégrale sur $\RR^\times$ à une intégrale
sur $\RR_+^\times$~:
\begin{eqnarray*}
  \lefteqn{
    a_{1,\ii}\left(
      \begin{pmatrix}1&x\\0&1\end{pmatrix}
      \begin{pmatrix}y^{1/2}&0\\0&y^{-1/2}\end{pmatrix}
    \right)
  }\\
  &=&
  \int_{\RR\times\RR^\times}
  \omega_\ii(t)|t|^{2s}
  (ty^{-1/2})^k((u-x)+iy)^k
  \\
  & &
  \times
  \exp(-(ty^{-1/2})^2((u-x)^2+y^2))
  \exp(2i\pi u)
  {\mathrm d}u{\mathrm d}^\times t
  \\
  &=&
  2\exp(2i\pi x)y^{-k/2}
  \\
  & &
  \times
  \int_{\RR\times\RR_+^\times}
  t^{k+2s}
  \exp(-t^2(u^2+y^2)/y)
  \\
  & &
  \times
  (u+iy)^k
  \exp(2i\pi u)
  {\mathrm d}u{\mathrm d}^\times t
  \\
\end{eqnarray*}

L'intégrale le long de $t$ se calcule via le lemme~\ref{lem:int_Gamma}
en page~\pageref{lem:int_Gamma} ; on arrange ensuite l'expression pour
préparer l'étape suivante~:
\begin{eqnarray*}
  \lefteqn{
    a_{1,\ii}\left(
      \begin{pmatrix}1&x\\0&1\end{pmatrix}
      \begin{pmatrix}y^{1/2}&0\\0&y^{-1/2}\end{pmatrix}
    \right)
  }\\
  &=&
  2\exp(2i\pi x)y^{-k/2}
  \\
  & &
  \times
  \int_{\RR}
  \frac12
  \frac{\Gamma\left(s+\frac{k}2\right)}{\left((u^2+y^2)/y\right)^{s+\frac{k}2}}
  (u+iy)^k
  \exp(2i\pi u)
  {\mathrm d}u
  \\
  &=&
  \exp(2i\pi x)y^s\Gamma\left(s+\frac{k}2\right)
  \\
  & &
  \times
  \int_{\RR}
  \frac{1}{(u^2+y^2)^{s+\frac{k}2}}
  (u+iy)^k
  \exp(2i\pi u)
  {\mathrm d}u
  \\
  &=&
  \exp(2i\pi x)y^s\Gamma\left(s+\frac{k}2\right)
  \\
  & &
  \times(-1)^k
  \int_{\RR}
  \frac{1}{(u^2+y^2)^{s+\frac{k}2}}
  (u-iy)^k
  \exp(-2i\pi u)
  {\mathrm d}u
  \\
  &=&
  \exp(2i\pi x)y^s\Gamma\left(s+\frac{k}2\right)(-1)^k
  \\
  & &
  \times
  \int_{\RR}
  (u+iy)^{-\left(s+\frac{k}2\right)}
  (u-iy)^{-\left(s-\frac{k}2\right)}
  \exp(-2i\pi u)
  {\mathrm d}u
\end{eqnarray*}

On est alors en position d'appliquer le lemme~\ref{lem:int_conf_hyper}
(page~\pageref{lem:int_conf_hyper})~:
\begin{eqnarray*}
  \lefteqn{
    a_{1,\ii}\left(
      \begin{pmatrix}1&x\\0&1\end{pmatrix}
      \begin{pmatrix}y^{1/2}&0\\0&y^{-1/2}\end{pmatrix}
    \right)
  }\\
  &=&
  \exp(2i\pi x)y^s\Gamma\left(s+\frac{k}2\right)(-1)^k
  \\
  & &
  \times
  \exp(-2\pi y)i^k\Gamma\left(s+\frac{k}2\right)^{-1}(2\pi)^{2s}
  (4\pi y)^{-\left(s-\frac{k}2\right)}
  \omega\left(4\pi y,s+\frac{k}2,s-\frac{k}2\right)
  \\
  &=&
  \exp(2i\pi(x+iy))(-i)^ky^s(2\pi)^{2s}
  (4\pi y)^{-\left(s-\frac{k}2\right)}
  \omega\left(4\pi y,s+\frac{k}2,s-\frac{k}2\right)
  \\
\end{eqnarray*}

\subsection{Cas de presque-holomorphie}\label{sec:phol_cas}

Dans cette dernière égalité, si on fait le choix $s=\frac{k}2-r$ avec
$r\in\NN$, on a finalement~:
\begin{eqnarray*}
  \lefteqn{
    a_{1,\ii}\left(
      \begin{pmatrix}1&x\\0&1\end{pmatrix}
      \begin{pmatrix}y^{1/2}&0\\0&y^{-1/2}\end{pmatrix}
    \right)
  }\\
  &=&
  \exp(2i\pi(x+iy))(-i)^ky^{\frac{k}2-r}(2\pi)^{k-2r}
  (4\pi y)^r\omega(4\pi y,k-r,-r)
  \\
\end{eqnarray*}

Cette dernière expression fait finalement apparaître la fonction
confluente hypergéométrique $\omega$, avec des arguments pour lesquels
on sait qu'elle a un développement polynomial, que l'on sait
expliciter ; c'est la proposition~\ref{prop:conf_hyper_poly} de la
page~\pageref{prop:conf_hyper_poly}.

\section{Exemple explicite}\label{sec:explicite}

Le but de cette section est d'écrire plus explicitement les calculs
qui précèdent pour les éclairer. On va donc se placer sur le corps
$\QQ$, fixer un poids $k\geq2$ et $s\in\CC$ de partie réelle
strictement positive, pour que la convergence soit assurée sans
prolongement analytique.

On définit à partir de là une fonction $\varphi$ de Schwartz-Bruhat en
demandant en partie non-archimédienne l'indicatrice de $\hat\ZZ^2$, et
en place infinie (unique vu le choix du corps) la fonction
correspondant au choix du poids $k$ ; enfin, on décide de considérer
un caractère central trivial ($\omega=1$).

On fixe enfin $x\in\RR$ et $y>0$, à partir desquels on définit
$g\in\GL_2(\AA)$ par l'identité en partie finie et
$\begin{pmatrix}1&x\\0&1\end{pmatrix}
\begin{pmatrix}y^{1/2}&0\\0&y^{-1/2}\end{pmatrix} $ en partie infinie,
et $z=x+iy$ dans le demi-plan de Poincaré.

On peut alors calculer :
\begin{eqnarray*}
\lefteqn{ E^{\rm ana}_{1,s}(\varphi)(g)} \\
  &=&
  |\det(g)|^{-s}
  \int_{\AA^\times/\QQ^\times}
  |t|^{2s}\Theta(\varphi)\left(\frac gt\right)
  {\rm d}^\times t
  \\
  &=&
  \int_{\AA^\times/\QQ^\times}|t|^{2s}
  \sum_{v\in\QQ^2\backslash\{0\}}\varphi(tg^{-1}v)
  {\rm d}^\times t
  \\
  &=&
  \int_{\AA^\times/\QQ^\times}
  |t|^{2s}
  \sum_{v\in\QQ^2\backslash\{0\}}
  \varphi_f(t_fv)
  \varphi_\infty
  \left(
    t_\infty
    \begin{pmatrix}y^{-1/2}&0\\0&y^{1/2}\end{pmatrix}
    \begin{pmatrix}1&-x\\0&1\end{pmatrix}
    \binom{v_1}{v_2}
  \right)
  {\rm d}^\times t
  \\
  &=&
  \int_{\AA^\times/\QQ^\times}
  |t|^{2s}
  \sum_{v\in\QQ^2\backslash\{0\}}
  \varphi_f(t_fv)
  \varphi_\infty
  \left(
    t_\infty y^{-1/2}
    \begin{pmatrix}1&0\\0&y\end{pmatrix}
    \begin{pmatrix}1&-x\\0&1\end{pmatrix}
    \binom{v_1}{v_2}
  \right)
  {\rm d}^\times t
  \\
  &=&
  \int_{\AA^\times/\QQ^\times}
  |t|^{2s}
  \sum_{v\in\QQ^2\backslash\{0\}}
  \varphi_f(t_fv)
  \varphi_\infty
  \left(
    t_\infty y^{-1/2}
    \binom{v_1-v_2x}{v_2y}
  \right)
  {\rm d}^\times t
  \\
  &=&
  \int_{\AA^\times/\QQ^\times}
  |t|^{2s}
  \sum_{v\in\QQ^2\backslash\{0\}}
  \varphi_f(t_fv)
  t_\infty^ky^{-k/2}
  (v_1-v_2\bar z)^k
  \exp\left(-\pi t_\infty^2y^{-1}|v_1-v_2\bar z|^2\right)
  {\rm d}^\times t
  \\
  &=&
  y^{-k/2}\sum_{v\in\QQ^2\backslash\{0\}}
  \int_{\AA^\times/\QQ^\times}
  |t|^{2s}t_\infty^k
  \ind_{\hat\ZZ^2}(t_fv)
  (v_1-v_2\bar z)^k
  \exp\left(-\pi t_\infty^2y^{-1}|v_1-v_2\bar z|^2\right)
  {\rm d}^\times t
  \\
\end{eqnarray*}

On utilise alors l'isomorphisme entre $\AA^\times/\QQ^\times$ et
$\AA_1^\times/\QQ^\times\times]0;+\infty[$ déjà utilisé pour écrire :
\begin{eqnarray*}
  E^{\rm ana}_{1,s}(\varphi)(g)
  &=&
  y^{-k/2}\sum_{v\in\QQ^2\backslash\{0\}}
  \left(\int_{\AA_1^\times/\QQ^\times}
    \ind_{\hat\ZZ^2}(tv)
    {\rm d}^\times t\right)
  (v_1-v_2\bar z)^k
  \\
  & &
  \times
  \left(\int_0^{+\infty}
    t^{k+2s}
    \exp\left(-\pi t^2y^{-1}|v_1-v_2\bar z|^2\right)
    {\rm d}^\times t\right)
  \\
\end{eqnarray*}

d'où l'on déduit, en utilisant l'intégrale archimédienne du
lemme~\ref{lem:int_Gamma} et l'intégrale non-archimédienne du
lemme~\ref{lem:vol_ideles_1} :
\begin{eqnarray*}
  E^{\rm ana}_{1,s}(\varphi)(g)
  &=&
  y^{-k/2}\sum_{v\in\QQ^2\backslash\{0\}}
  \ind_{\hat\ZZ^2}(v)
  {\rm Vol}(\AA_1^\times/\QQ^\times)
  (v_1-v_2\bar z)^k
  \\
  & &
  \times
  \frac12
  \frac{\Gamma(s+\frac{k}2)}{(\pi y^{-1}|v_1-v_2\bar z|^2)^{s+\frac{k}2}}
  \\
  &=&
  y^{-s}
  \frac{\Gamma\left(s+\frac{k}2\right)}{\pi^{s+\frac{k}2}}
  \sum_{v\in\ZZ^2\backslash\{0\}}
  (v_1-v_2\bar z)^k
  |v_1-v_2\bar z|^{-k-2s}
\end{eqnarray*}

Notons que pour le choix $s=\frac{k}2$, qui correspond à l'exemple le
plus classique, on obtient~:
\[
E^{\rm ana}_{1,\frac{k}2}(\varphi)(g) = y^{-\frac{k}2}
\frac{\Gamma(k)}{\pi^k} \sum_{v\in\ZZ^2\backslash\{0\}} (v_1+v_2
z)^{-k}
\]

\begin{remark}
  La technique usuelle de prolongement analytique donne les fonctions
  usuelles avec le choix $s=0$ en général ; cependant, on a ici ramené
  l'équation fonctionnelle à une symétrie par rapport à $1$ ; c'est
  donc bien avec le choix $s=\frac{k}2$ qu'il est naturel de retrouver
  les séries les plus classiques.
\end{remark}

\bibliography{../Abords/bibliography}

\begin{thebibliography}{10}
\expandafter\ifx\csname fonteauteurs\endcsname\relax
\def\fonteauteurs{\scshape}\fi

\bibitem{Bump1997a}
D.~\bgroup\fonteauteurs\bgroup Bump\egroup\egroup{} :
\newblock {\em Automorphic forms and representations}.
\newblock Num\'ero ~55 de Cambridge studies in advanced mathematics. Cambridge
  university press, 1997.

\bibitem{Dabrowski1994a}
A.~\bgroup\fonteauteurs\bgroup Dabrowski\egroup\egroup{} :
\newblock $p$-adic {$L$}-functions of {H}ilbert modular forms.
\newblock {\em Annales de l'{I}nstitut {F}ourier},
  44(4)\string:\penalty500\relax 1025--1041, 1994.

\bibitem{Garrett1990a}
P.~B. \bgroup\fonteauteurs\bgroup Garrett\egroup\egroup{} :
\newblock {\em Holomorphic {H}ilbert modular forms}.
\newblock Wadsworth, 1990.

\bibitem{Godement1966a}
R.~\bgroup\fonteauteurs\bgroup Godement\egroup\egroup{} :
\newblock Analyse spectrale des formes modulaires.
\newblock {\em Séminaire {B}ourbaki}, 1964-1966(278)\string:\penalty500\relax
  15--40, 1966.

\bibitem{Hida1985a}
H.~\bgroup\fonteauteurs\bgroup Hida\egroup\egroup{} :
\newblock A $p$-adic measure attached to the zeta functions associated with two
  elliptic modular forms {I}.
\newblock {\em Inventiones mathematicae}, 79, 1985.

\bibitem{Hida1993a}
H.~\bgroup\fonteauteurs\bgroup Hida\egroup\egroup{} :
\newblock {\em Elementary theory of {$L$}-functions and {E}isenstein series}.
\newblock Num\'ero ~26 de Student texts. London mathematical society, 1993.

\bibitem{Kudla2003a}
S.~S. \bgroup\fonteauteurs\bgroup Kudla\egroup\egroup{} :
\newblock Tate's thesis.
\newblock \emph{In} J.~\bgroup\fonteauteurs\bgroup Bernstein\egroup\egroup{} et
  S.~S. \bgroup\fonteauteurs\bgroup Gelbart\egroup\egroup{}, \'editeurs :  {\em
  Introduction to the {L}anglands program}, pages 109--132. Birkhäuser, 2003.

\bibitem{Manin1976a}
Y.~I. \bgroup\fonteauteurs\bgroup Manin\egroup\egroup{} :
\newblock Non-archimedean integration and {J}acquet-{L}anglands $p$-adic
  {$L$}-functions.
\newblock {\em Russian mathematical surveys}, 31\string:\penalty500\relax
  5--57, 1976.

\bibitem{Panchishkin1991a}
A.~A. \bgroup\fonteauteurs\bgroup Panchishkin\egroup\egroup{} :
\newblock {\em Non-archimedean {$L$}-functions associated with {S}iegel and
  {H}ilbert modular forms}.
\newblock Num\'ero  1471 de Lecture notes in mathematics. Springer, 1991.

\bibitem{Panchishkin2002b}
A.~A. \bgroup\fonteauteurs\bgroup Panchishkin\egroup\egroup{} :
\newblock A new method of constructing $p$-adic {$L$}-functions associated with
  modular forms.
\newblock {\em Moscow mathematical journal}, 2(2), 2002.

\bibitem{Puydt2003a}
J.~\bgroup\fonteauteurs\bgroup Puydt\egroup\egroup{} :
\newblock {\em Valeurs spéciales de fonctions {$L$} de formes modulaires
  adéliques}.
\newblock Th\`ese de doctorat, Institut Fourier, 2003.

\bibitem{RamakrishnanValenza1999a}
D.~\bgroup\fonteauteurs\bgroup Ramakrishnan\egroup\egroup{} et R.~J.
  \bgroup\fonteauteurs\bgroup Valenza\egroup\egroup{} :
\newblock {\em Fourier analysis on number fields}.
\newblock Springer-Verlag, 1999.

\bibitem{Scholl1998a}
A.J. \bgroup\fonteauteurs\bgroup Scholl\egroup\egroup{} :
\newblock An introduction to {K}ato's {E}uler systems.
\newblock \emph{In} A.J. \bgroup\fonteauteurs\bgroup Scholl\egroup\egroup{} et
  R.~L. \bgroup\fonteauteurs\bgroup Taylor\egroup\egroup{}, \'editeurs :  {\em
  Galois representations in arithmetic algebraic geometry}, pages 379--460.
  Cambridge university press, 1998.

\bibitem{Shimura1975a}
G.~\bgroup\fonteauteurs\bgroup Shimura\egroup\egroup{} :
\newblock On the holomorphy of certain {D}irichlet series.
\newblock \emph{In} {\em Proceedings of the {L}ondon mathematical society},
  pages 79--98, 1975.

\bibitem{Shimura1978a}
G.~\bgroup\fonteauteurs\bgroup Shimura\egroup\egroup{} :
\newblock The special values of the {Z}eta functions associated with {H}ilbert
  modular forms.
\newblock {\em Duke mathematical journal}, 45(3), 1978.
\newblock Erratum page 697.

\bibitem{Shimura1982a}
G.~\bgroup\fonteauteurs\bgroup Shimura\egroup\egroup{} :
\newblock Confluent hypergeometric functions on tube domains.
\newblock {\em Mathematische {A}nnalen}, 260\string:\penalty500\relax 269--302,
  1982.

\bibitem{Shimura1983a}
G.~\bgroup\fonteauteurs\bgroup Shimura\egroup\egroup{} :
\newblock On {E}isenstein series.
\newblock {\em Duke mathematical journal}, 50(2)\string:\penalty500\relax
  417--476, 1983.

\bibitem{Shimura2000a}
G.~\bgroup\fonteauteurs\bgroup Shimura\egroup\egroup{} :
\newblock {\em Arithmeticity in the theory of automorphic forms}.
\newblock Num\'ero ~82 de Mathematical surveys and monographs. American
  mathematical society, 2000.

\bibitem{Tate1965b}
J.~T. \bgroup\fonteauteurs\bgroup Tate\egroup\egroup{} :
\newblock Fourier analysis in number fields and {H}ecke's {Z}eta-functions.
\newblock \emph{In} J.W.S. \bgroup\fonteauteurs\bgroup Cassels\egroup\egroup{}
  et A.~\bgroup\fonteauteurs\bgroup Fröhlich\egroup\egroup{}, \'editeurs :
  {\em Algebraic number theory}, pages 305--347. Academic press, 1965.

\bibitem{Weil1967a}
A.~\bgroup\fonteauteurs\bgroup Weil\egroup\egroup{} :
\newblock {\em Basic number theory}.
\newblock Springer, 1967.

\end{thebibliography}
\bibliographystyle{plain-fr}

\end{document}